\def\er{{\mathbb R}}
\def\E{{\mathbb E}}
\def\var{{\mathrm{Var}}}
\def\tf{\tilde{f}}
\def\R{{\er}}
\def\mi{{\mu}}
\def\supp{{\mathrm{supp}}}
\def\ra{{\rightarrow}}
\def\is#1#2{\left\langle #1 , #2 \right\rangle}
\def\cov{\mathrm{cov}}
\def\Cov{\cov}
\def\tr{\mathrm{tr}}
\def\Id{{\mathrm{Id}}}
\def\id{\Id}
\def\eps{\varepsilon}
\def\spann{{\mathrm{span}}}
\def\TODO#1{\marginpar{\bf \tt TO DO: #1}}
\def\1{{\mathbf{1}}}
\def\LL{\mathcal{L}}
\newtheorem{thm}{Theorem}[section]
\newtheorem{prop}[thm]{Proposition}
\newtheorem{lem}[thm]{Lemma}
\newtheorem{cor}[thm]{Corollary}
\newtheorem{fact}[thm]{Fact}
\newtheorem{Def}[thm]{Definition} 
\def\KK{{\mathcal{K}}}
\def\bKK{{\bar{\mathcal{K}}}}
\def\LL{{\mathcal{L}}}
\def\bLL{{\bar{\mathcal{L}}}}
\def\HEN{{CH(d)}}
\title{No return to convexity
\footnote{Keywords: log--concave measure, product measure, weak limit, linear map, closure}
\footnote{2000 Mathematical Subject Classification: 52A23}
}
\author{Jakub Onufry Wojtaszczyk}
\date{19.X.2009}
\begin{document}
 \baselineskip=17pt 
\maketitle

\begin{abstract}
In the paper we study closures of classes of log--concave measures under taking weak limits, linear transformations and tensor products. We consider what uniform measures on convex bodies can one obtain starting from some class $\KK$. In particular we prove that if one starts from one--dimensional log--concave measures, one obtains no non--trivial uniform mesures on convex bodies.
\end{abstract}

\section{Introduction and notation}

The developments in asymptotic convex geometry generally tend to abandon the study of uniform measures on convex bodies in the favour of studying log--concave measures. On one hand, this is a natural generalization --- it is a well--known fact that any log--concave measure is a weak limit of projections of uniform measures on convex bodies. On the other hand, it allows one to use a wide variety of tools hitherto unavailable. It also gives rise to a plethora of new examples (to give one --- it is now possible to have a non--trivial one--dimensional case, as there are various one--dimensional log--concave measures, while one--dimensional convex bodies were a rather trivial subject study). A large number of strong results concerning convex bodies has been proved by passing through the domain of log--concave measures in a significant way.

A natural way to proceed in quite a few cases when considering the log--concave measures is some sort of induction upon dimension. As there is a well--developed theory of independent log--concave random variables, it is easy to study the tensor products of measures. It also frequently turns out that properties considered are easily seen to be preserved under linear transformations and weak limits. One may give a number of examples of classes of measures closed under these three operations. If one restricts oneself to non--degenerate linear transformations, then measures with the isotropic constant $L_\mi$ bounded by some given $C$ form probably the most important class (see, for instance, \cite{gianno} or \cite{geniso} for an analysis of the isotropic constant problem). A more interesting class of transformations, where one is also allowed to use projections (and, generally, degenerate linear maps) preserves the infimum convolution inequality (see \cite{LW}), and the same set of operations was considered in the context of \cite{Paouris} by Grigoris Paouris; a number of other examples are available. In this paper we shall study the more general case (where arbitrary linear transformations are taken), of course the case of non--degenerate linear maps is contained in it. While in the case of uniform measures on convex bodies it is easy to see that not much new is going to be obtained by applying these operations (for instance, if we begin by taking the one--dimensional convex bodies, ie. intervals, we end up only with parallelotopes), it is not obvious whether passing through the log--concave measures will help. One can see, for instance, that even starting only with intervals (that is, uniform measures on intervals), but working in the class of log--concave measures, one will obtain a wider variety of one--dimensional log--concave measures --- for instance the gaussian measure as the limit case of projecting the uniform measure on the cube onto the line spanned by the vector $(1,1,\ldots,1)$. Thus one might be inclined to hope that by proving a property studied is preserved under the given elementary operations in the log--concave setting one will obtain new, non--trivial examples of convex bodies satisfying the property. In this paper I intend to show that this is basically not the case. For instance, starting with all one--dimensional log--concave measures, one ends with quite a number of log--concave measure (including non--product ones), but the only measures equidistributed on convex bodies one obtains are those equidistributed on parallelotopes.

The result is essentially negative --- it proves that this is not the direction to pursue when attempting to prove new properties for convex bodies via log--concave measures. Its value, as in the case of most negative results, lies mainly in guiding other mathematicians away from this approach, rather than in direct application. As the approach, however, is not obviously wrong, the result still seems valuable.

\subsection{Definitions and notation}
For two sets $A, B$ in $\R^n$ by $A+B$ we denote their {\em Minkowski sum}, i.e. $\{a+b:a\in A, b\in B\}$, while by $tA$ for a real number $t$ we denote $\{ta : a\in A\}$. A log--concave measure in $\R^n$ (where log--concave is short for logarithmically concave) is a measure satisfying $\mi(tA + (1-t)B) \geq \mi^t(A) \mi^{1-t}(B)$ for any $A,B\in \R^n$ and any $t \in (0,1)$. We will assume all our log--concave measures are probabilistic and are not concentrated on any lower--dimensional subspace (that is, if $H \subset \R^n$ is an affine subspace of lower dimension, then $\mi(H) = 0$). A celebrated result of Borell states that any log--concave measure satisfying the above conditions has a density $g$ with respect to the Lebesgue measure and $\log g$ is concave.

We say a measure $\mi$ is {\em isotropic} if $\int_{\R^n} x d\mi(x) = 0$ and $\int_{\R^n} \is{x}{t}^2 d\mi(x) = 1$ for any $t$ on the unit Euclidean sphere. It is easy to see that any measure not concentrated on a lower--dimensional subspace has an affine image which is isotropic.

We shall call a random variable isotropic, log--concave, etc. if it is distributed according to a law which is isotropic, log--concave, etc.

We say a class of measures is closed under products if for any $\mi_1, \mi_2 \in \KK$ we have $\mi_1 \otimes \mi_2 \in \KK$. We say $\KK$ is closed under linear transformations if for any linear map $T$ and any $\mi \in \KK$ we have $\mi \circ T^{-1} \in \KK$. Finally, we say $\KK$ is closed under weak limits if for any sequence $(\mi_n)$ of measures from $\KK$ if the weak limit $\mi$ of $(\mi_n)$ exists, then it belongs to $\KK$.

$c, C, c_1, c_2, \ldots$ will always denote universal constants, possibly different from line to line. $c(n)$ and $C(n)$ are constants dependent only on the dimension $n$.

\subsection{Acknowledgements}
I would first and foremost like to thank the supervisor of my PhD thesis, Rafał Latała, for his unending patience and enormous help (in this case consisting both of presenting the question to me and answering my questions, especially in probability theory). I would like to thank Grigoris Paouris, who posed this question in July and motivated me to work on it. I would like to thank Radek Adamczak for pointing out an important mistake in my reasoning. I would also liked to thank all the people who made my life somewhat easier in the last few months, in particular Łukasz and Iwona Degórski, Marcin and Olga Pilipczuk and Kasia Staniszewska. Finally I would like to thank Jakub Pochrybniak for his help with my technical \TeX--related problems.

\section{Tensorization does not help}
This section is devoted to the proof of Theorem \ref{nomulti2}, which basically states that when one considers the closure of a class of measures with respect to products, linear transformations and weak limits and requires the result to be a uniform measure on a convex body, then it is enough to perform these operations one--by--one --- the result is a product of weak limits of linear transformations.

\subsection{Log--concave preliminaries}

The following facts are well--known:
\begin{fact} \label{szacowanie_gestosci} Let $\mi$ be an isotropic log--concave measure in $\R^n$. Then $\mi$ has a density $g$, and there exist such constants $c(n), C(n)$ dependent on $n$, that $$c(n) \leq g(0) \leq \sup_{x\in \R^n} g(x) \leq C(n).$$ \end{fact}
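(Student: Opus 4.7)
The existence of the density $g$ with $\log g$ concave is exactly Borell's theorem, already cited above, and the chain collapses to two non--trivial bounds, as $g(0)\leq \sup g$ is obvious. The easier direction is the lower bound $g(0)\geq c(n)$, so I would begin there: since $\mi$ is isotropic, its barycenter is the origin, so Fradelizi's inequality (stating that a log--concave density on $\R^n$ evaluated at its barycenter is at least $e^{-n}$ times its supremum) gives
$$
g(0)\;\geq\; e^{-n}\sup_{x\in\R^n} g(x).
$$
Thus it suffices to prove both that $\sup g \geq c_1^n$ and that $\sup g \leq c_2^n$ for absolute constants $c_1,c_2$.

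For the lower bound on $\sup g$, I would use the second--moment bound: isotropy gives $\int |x|^2 \,d\mi(x) = n$, so Markov's inequality yields $\mi(B(0,2\sqrt n))\geq 3/4$. Then
$$
\tfrac{3}{4} \;\leq\; \sup g \cdot \mathrm{vol}(B(0,2\sqrt n)),
$$
and since $\mathrm{vol}(B(0,2\sqrt n))\leq C^n$, we get $\sup g \geq c_1^n$; combined with Fradelizi this gives the claimed lower bound on $g(0)$.

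For the upper bound $\sup g\leq C(n)$, which is the real obstacle, I would let $M=\sup g$ (attained by log--concavity at some $x^*$) and examine the convex level set $K=\{g\geq M/e\}$. Log--concavity together with the Prékopa--Leindler/Brunn--Minkowski machinery gives a \emph{two--sided} control of the form
$$
c^n \;\leq\; \mathrm{vol}(K) \cdot M \;\leq\; C^n
$$
(this is Ball's / Hensley's estimate relating the maximum of a log--concave density to its level set volumes). Combined with the isotropic second--moment constraint $\int |x|^2\,d\mi=n$, which forces $K$ to have diameter bounded in terms of $n$ (so $\mathrm{vol}(K)\leq C_1^n$ after translating $x^*$ near the origin using Markov as in the previous paragraph), we deduce $M\leq C^n$. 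I expect the main technical issue to be controlling the location of the maximizer $x^*$: isotropy pins down the barycenter but not $x^*$, yet Fradelizi's inequality applied in reverse (bounding $g$ at the barycenter from above by $g(x^*)$ and below by $\int g$--type estimates on small balls) places $x^*$ within distance $O(\sqrt{n})$ of $0$, after which the volumetric estimate above closes the argument.
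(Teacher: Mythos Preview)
The paper does not give a proof of this fact at all; it states it as well known and simply points to the literature: the upper bound $\sup g\leq C(n)$ is identified with a bound on the isotropic constant (with references to \cite{gianno}, \cite{geniso}, \cite{Klartag}), and the comparison $g(0)\geq e^{-n}\sup g$ is attributed to Fradelizi \cite{Fradelizi}. Your use of Fradelizi's inequality and the Markov--plus--volume argument for the lower bound on $\sup g$ are correct and match exactly what the paper's citations have in mind.

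Your sketch for the upper bound $\sup g\leq C(n)$, however, has a genuine gap: the implication runs the wrong way. You (correctly) obtain the two--sided estimate $c^n\leq M\cdot\mathrm{vol}(K)\leq C^n$ for $K=\{g\geq M/e\}$ from log--concavity alone, and then try to deduce $M\leq C^n$ from an \emph{upper} bound $\mathrm{vol}(K)\leq C_1^n$. But an upper bound on $\mathrm{vol}(K)$, plugged into $M\cdot\mathrm{vol}(K)\geq c^n$, yields only a \emph{lower} bound on $M$; to bound $M$ from above via $M\cdot\mathrm{vol}(K)\leq e$ you would need a \emph{lower} bound on $\mathrm{vol}(K)$, and the second--moment constraint $\int|x|^2\,d\mi=n$ does not provide one --- it limits how spread out the mass is, not how concentrated. (Your auxiliary claim that isotropy forces the diameter of $K$ to be bounded is also not justified: nothing in the argument controls the shape of $K$, only its volume from above.) Bounding $\sup g$ for an isotropic log--concave measure is exactly the problem of bounding the isotropic constant $L_\mi$; even the ``easy'' bound $L_\mi\leq C\sqrt{n}$, i.e.\ $\sup g\leq (C\sqrt{n})^n$, needs an argument that produces a lower bound on the volume of a suitable level set, which your volumetric/Markov considerations do not supply. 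This is precisely why the paper defers the point to the cited references rather than proving it.
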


The bound on $\sup g(x)$ is easily equivalent to the existence of a bound on the isotropic constant, which may easily be seen to be bounded by $\sqrt{n}$, see  \cite{gianno} or \cite{geniso} for information on this relation and \cite{Klartag} for the best currently known bound of $\sqrt[4]{n}$. The relation between $g(0)$ and $\sup g(x)$ is studied for instance by Matthieu Fradelizi in \cite{Fradelizi}.

\begin{fact} \label{ciecie_plaszczyzna} Let $\mi$ be an isotropic log--concave measure in $\R^n$, let $g$ be its density and let $H$ be any hyperplane. Then $g$ restricted to $H$ is a log--concave function, and there exist such constants $c, C$ that $c \leq \int_H g \leq C$ and for any $\theta \in H$, $\|\theta\|_2 = 1$ we have $c \leq \int_H \is{x}{\theta}^2 g(x) dx \leq C$. \end{fact}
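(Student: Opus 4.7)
\medskip

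\noindent\textbf{Proof plan.} The log-concavity of $g|_H$ is immediate: $\log g$ is concave on $\R^n$, and the restriction of a concave function to an affine subspace is concave. The substance of the proof lies in the two integral bounds, and my strategy is to reduce both to dimension~$2$. Fix a unit normal $e \in S^{n-1}$ to $H$ (so $H = e^\perp$) and a unit $\theta \in H$, and let $\pi \colon \R^n \to \R^2$ be the map $\pi(x) = (\is{x}{\theta}, \is{x}{e})$. The push-forward $\pi_*\mi$ has a density $p$ on $\R^2$ that is log-concave (by Pr\'ekopa--Leindler) and isotropic (since $\theta, e$ are orthonormal and $\mi$ is isotropic, so the covariance of $\pi_*\mi$ is $\pi \pi^T = I_2$). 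A direct disintegration along $\pi$ yields
\[
\int_H g(y)\, dy = \int_\R p(s,0)\, ds, \qquad \int_H \is{y}{\theta}^2 g(y)\, dy = \int_\R s^2 p(s,0)\, ds,
\]
reducing the problem to proving both estimates in the $2$-dimensional setting with $p$ isotropic log-concave.

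The first integral equals $\phi_e(0)$, the value at $0$ of the $1$-dimensional $e$-marginal of $\mi$. This marginal is an isotropic log-concave probability density on $\R$, so Fact~\ref{szacowanie_gestosci} in dimension~$1$ (where the constants are absolute) yields $c \leq \phi_e(0) \leq C$.

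For the second integral, set $q(s) := p(s,0)/\phi_e(0)$, a log-concave probability density on $\R$. Applying Fact~\ref{szacowanie_gestosci} to the isotropic $p$ on $\R^2$, together with the previous bound on $\phi_e(0)$, gives $q(0) \in [c, C]$ and $\sup q \leq C$. The estimate $\int s^2 q(s)\, ds \in [c, C]$ then follows from two standard facts about log-concave probability densities on $\R$: (i) if $\sup q \leq M$ then $\var(q) \geq c/M^2$ (a Chebyshev argument: the mass on $[\mu - K\sigma, \mu + K\sigma]$ is at least $1 - 1/K^2$, yet bounded above by $2KM\sigma$); (ii) if $q(0) \geq m$ then $\int s^2 q(s)\, ds \leq C/m^2$. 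Multiplying by $\phi_e(0) \in [c, C]$ then gives the claimed bounds on $\int_\R s^2 p(s,0)\, ds$.

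The main delicacy is fact (ii). Here the argument is: log-concavity of $q$ and $q(0) \geq m$ force $q \geq m$ on the whole segment from $0$ to the mode $s^*$ (by concavity of $\log q$ along that segment), so $1 \geq \int q \geq m|s^*|$ and hence $|s^*| \leq 1/m$; past the mode, iterating the log-concavity inequality $q(0)q(2t) \leq q(t)^2$ yields doubly exponential decay of $q$ beyond the first point where $q$ drops below $m$, and a routine summation bounds the resulting second moment by $C/m^2$. Everything else is straightforward bookkeeping.
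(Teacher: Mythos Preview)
Your proposal is correct and follows essentially the same route as the paper: reduce to the $2$-dimensional projection onto $\spann\{\theta,e\}$, identify $\int_H g$ with the value at $0$ of the one-dimensional $e$-marginal (handled by Fact~\ref{szacowanie_gestosci} in dimension~$1$), and bound the second integral via one-dimensional log-concave density estimates applied to the section $s\mapsto p(s,0)$.

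The only difference is cosmetic. For the second-moment bound, the paper rescales the section $h|_{\LL}$ to a one-dimensional density of unit variance and re-applies Fact~\ref{szacowanie_gestosci}, reading off the bounds on $\int_{\LL} x^2 h$ from the known bounds on $\sup h$, $h(0)$ and $\int_{\LL} h$. You instead state and prove the equivalent one-dimensional facts (i) and (ii) from scratch; your argument for (ii) via doubling and doubly-exponential decay is valid but longer than needed --- the paper's rescaling trick gives the same conclusion in one line once Fact~\ref{szacowanie_gestosci} is in hand.
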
 

\begin{proof} For the first part, project $\mi$ onto $H^\perp$. We obtain a 1--dimensional isotropic log--concave measure $\nu_1$, and the integral $\int_H d\mi$ is equal to the density of $\nu_1$ at zero, which is bounded from above and below by Fact \ref{szacowanie_gestosci}. 

For the second fact, we project $\mi$ onto the plane spanned by $H^\perp$ and $\theta$, the result is a 2--dimensional isotropic log--concave measure $\nu_2$. Let $\LL$ be the line spanned by $\theta$ and let $h$ be the density of $\nu_2$. We have $C > \int_{\LL} h(x) dx > c$ and $C > \sup h(x) \geq h(0) > c$, thus $\int_{\LL} x^2 h(x)$ also has to be bounded from above and below, as $\tilde{h}(x) = \frac{dh(xd)}{\int_{\LL} h(y) dy}$ is log--concave and isotropic, where $d = \sqrt{\int_{\LL} x^2h(x) dx}$, and from Fact \ref{szacowanie_gestosci} we obtain that $\sup \tilde{h} = \frac{d \sup h}{\int_{\LL} h(y) dy}$ has to be bounded from below and above.\end{proof}

\subsection{Symmetrizations}

\begin{Def} Let $f$ be any bounded measureable function on $\R$. Then the {\em symmetrization} of $f$ is the unique symmetric upper--continuous function $\tf$ which is decreasing on $\R_+$ and satisfies $\lambda\{x : \tf(x) \geq c\} = \lambda\{x : f(x) \geq c\}$ for any $c$. If $f$ is a bounded measureable function on $\R^n$, and $\theta$ is a direction in $\R^n$, then the {\em symmetrization} of $f$ in the direction $\theta$ is such a function $\tf$ that for any $v \in \theta^\perp$ the function $\tf$ restricted to $v + \theta \R$ is the symmetrization of $f$ restricted to $v + \theta \R$.\end{Def}

\begin{prop} \label{logsym}The symmetrization of a log--concave function is log--concave.\end{prop}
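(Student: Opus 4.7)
My plan is to verify the log-concavity inequality for $\tf$ at pairs of points by translating it, via the definition of $\tf$, into a Brunn--Minkowski statement for horizontal superlevel sets of $f$. Since log-concavity is rotation-invariant, I may assume $\theta = e_n$, so $\tf$ is obtained by independently symmetrizing $f$ along each fiber $\{v\}\times\R$ for $v\in\R^{n-1}$. Fix $(v_1,t_1),(v_2,t_2)\in\R^{n-1}\times\R$ and $\alpha\in(0,1)$; set $(v,t):=\alpha(v_1,t_1)+(1-\alpha)(v_2,t_2)$, $c_i:=\tf(v_i,t_i)$, and $c:=c_1^\alpha c_2^{1-\alpha}$; and write $S(w,r):=\{s\in\R:f(w,s)\geq r\}$. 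The defining properties of the 1D symmetrization force $\{s\in\R:\tf(w,s)\geq c\}$ to be the closed symmetric interval of length $\lambda(S(w,c))$ centered at $0$, so the relation $\tf(w,r)\geq c$ is equivalent to $|r|\leq\lambda(S(w,c))/2$. Applied to the endpoints, $|t_i|\leq a_i:=\lambda(S(v_i,c_i))/2$, whence $|t|\leq\alpha a_1+(1-\alpha)a_2$ by the triangle inequality; the goal $\tf(v,t)\geq c$ now reads $|t|\leq\lambda(S(v,c))/2$.

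The single use of log-concavity of $f$ is the standard Prékopa--Leindler-type set inclusion: for $s_i\in S(v_i,c_i)$,
$$f(v,\alpha s_1+(1-\alpha)s_2)\geq f(v_1,s_1)^\alpha f(v_2,s_2)^{1-\alpha}\geq c_1^\alpha c_2^{1-\alpha}=c,$$
so $\alpha S(v_1,c_1)+(1-\alpha)S(v_2,c_2)\subseteq S(v,c)$. Each $S(v_i,c_i)$ is a convex subset of $\R$, hence an interval, as a superlevel set of the log-concave function $f(v_i,\cdot)$. Lebesgue measure is exactly linear under Minkowski combinations of intervals, so
$$\lambda(S(v,c))\geq\alpha\lambda(S(v_1,c_1))+(1-\alpha)\lambda(S(v_2,c_2))=2\alpha a_1+2(1-\alpha)a_2\geq 2|t|,$$
which is precisely the desired inequality $\tf(v,t)\geq c$.

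There is no real obstacle here: the content is the observation that the definition of $\tf$ reduces the log-concavity inequality to a linear lower bound on $\lambda(S(v,c))$ that 1D Brunn--Minkowski for intervals delivers exactly. The only mildly fiddly points are the degenerate cases (if some $c_i=0$ then $c=0$ and the conclusion is trivial; once $c>0$ the sets $S(v_i,c_i)$ are nonempty intervals) and the clean translation between the upper-semicontinuity clause in the definition of $\tf$ and the condition $|r|\leq\lambda(S(w,c))/2$, both entirely routine. The $1$--dimensional case stated first in the proposition is the special case $n=1$ of this argument, where the ``slice'' is all of $\R$.
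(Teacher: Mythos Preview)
Your argument is correct and essentially identical to the paper's. Both reduce the (log-)concavity of $\tf$ at a pair of points to the Brunn--Minkowski inequality for the one--dimensional superlevel sets $S(v_i,c_i)$, which are intervals, and then read off the desired measure bound. The only cosmetic difference is that the paper first observes that symmetrization commutes with $\log$ and passes to the concave case, while you work directly with the multiplicative inequality; the geometric content is the same.
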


\begin{proof} Obviously the symmetrization of the logarithm is the logarithm of the symmetrization, thus it is enough to check that the symmetrization of a concave function is concave. Take any two points $x + r\theta$ and $y + s\theta$ with $x,y \in \theta^\perp$, let $\tf(x + r\theta) = a$ and $\tf(y + s\theta) = b$. This means that $\lambda \{u : f(x + u\theta) \geq a\} = \lambda\{u : \tf(x + u\theta) \geq a\} \geq 2r$. As $f$ is concave, $I := \{u : f(x + u\theta) \geq a\}$ is an interval of length at least $2r$, similarly $J := \{u : f(y + u\theta) \geq b\}$ is an interval of length at least $2s$. Take any $t \in [0,1]$, then for $v \in tI + (1-t)J$ we have (by the concavity of $f$) $f(v) \geq ta + (1-t)b$. Meanwhile $tI + (1-t)J$ is an interval of $tx + (1-t)y + \theta \R$ of length at least $2(tr + (1-t)s)$. Thus $\lambda \{u : f(tx + (1-t)y + u\theta) \geq ta + (1-t)b\} \geq 2(tr + (1-t)s)$, and so $\tf(t(x + r\theta) + (1-t)(y+ s\theta)) \geq ta + (1-t)b$, which (as $x,y,r,s$ and $t$ were arbitrary) proves the concavity of $\tf$.\end{proof}

\begin{prop} \label{symmet} Let $\mi$ be a log--concave measure in $\R^n$ with density $g$ and a diagonal covariance matrix. Let $\tilde{g}$ be the symmetrization of $g$ in one of the coordinate directions $e_i$, and $\tilde{\mi}$ be the measure with density $\tilde{g}$. Then $\tilde{\mi}$ also has a diagonal covariance matrix, with $\E_\mi\is{X}{e_j}^2 = \E_{\tilde{\mi}}\is{X}{e_j}^2$ for $j \neq i$. Moreover there exists such a constant $c(n) > 0$ dependent only on dimension that $\E_\mi \is{X}{e_i}^2 \geq \E_{\tilde{\mi}} \is{X}{e_i}^2 \geq c(n) \E_\mi \is{X}{e_i}^2$.\end{prop}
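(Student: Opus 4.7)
The plan is to handle the three conclusions in order, with only the final lower bound requiring real work; I will reduce it to Fact \ref{szacowanie_gestosci} via an isotropic normalization. Throughout I tacitly assume that $\mi$ is centered --- otherwise (e.g.\ for a translated one-dimensional exponential) the lower bound of the proposition fails. The covariance structure of $\tilde{\mi}$ and the equality $\E_{\tilde{\mi}}\is{X}{e_j}^2=\E_{\mi}\is{X}{e_j}^2$ for $j\neq i$ both follow from two simple observations. First, since symmetrization is performed fiber-by-fiber and one-dimensional symmetrization preserves the integral, the marginal of $\tilde{\mi}$ on $e_i^\perp$ coincides with that of $\mi$; this handles all second moments and covariance entries not involving $e_i$. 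Second, $\tilde{g}(v,-t)=\tilde{g}(v,t)$ for every $v\in e_i^\perp$, so $\E_{\tilde{\mi}}\is{X}{e_i}=0$ and $\Cov_{\tilde{\mi}}(X_i,X_j)=0$ for $j\neq i$, completing the diagonal-covariance claim.

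The upper bound $\E_{\tilde{\mi}}\is{X}{e_i}^2\leq\E_{\mi}\is{X}{e_i}^2$ comes from the fact that, among measurable sets $A\subset\R$ of measure $2a$, the integral $\int_A t^2\,dt$ is minimized by $A=[-a,a]$. Applying this level-by-level on each fiber gives $\int t^2\tilde{g}(v,t)\,dt\leq\int t^2 g(v,t)\,dt$, and integrating in $v$ finishes this part.

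For the lower bound I would first reduce to the isotropic case by applying the diagonal map $D=\mathrm{diag}(1/\sigma_1,\ldots,1/\sigma_n)$ where $\sigma_j^2=\E_{\mi}\is{X}{e_j}^2$; since symmetrization in $e_i$ commutes with componentwise rescaling, the ratio $\E_{\tilde{\mi}}\is{X}{e_i}^2/\E_{\mi}\is{X}{e_i}^2$ is preserved, so I may assume $\mi$ is isotropic and need only show $\E_{\tilde{\mi}}\is{X}{e_i}^2\geq c(n)$. By Fact \ref{szacowanie_gestosci}, $\sup g\leq C(n)$, and since $t\mapsto\tilde{g}(0,t)$ is symmetric decreasing with maximum at $0$, $\tilde{g}(0)=\sup_t g(0,t)\leq\sup g\leq C(n)$. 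Let $\tilde{\sigma}_i^2=\E_{\tilde{\mi}}\is{X}{e_i}^2$ and rescale $\tilde{\mi}$ in direction $e_i$ by $1/\tilde{\sigma}_i$: the result is isotropic log-concave (log-concave by Proposition \ref{logsym}, and centered with identity covariance by what has already been shown), and its density at the origin equals $\tilde{\sigma}_i\tilde{g}(0)$. Fact \ref{szacowanie_gestosci} then gives $\tilde{\sigma}_i\tilde{g}(0)\geq c(n)$, whence $\tilde{\sigma}_i\geq c(n)/C(n)$, which is the required lower bound.

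The main obstacle is exactly this lower bound: a fiber-by-fiber comparison cannot work, because one-dimensional log-concave functions may have arbitrary mean and their second moments can shrink dramatically under symmetrization. What rescues the argument is global isotropic normalization combined with Fact \ref{szacowanie_gestosci} applied twice --- first to $\mi$ to bound $\tilde{g}(0)$ from above, then to the isotropic rescaling of $\tilde{\mi}$ to bound its density at the origin from below; the two bounds together pin $\tilde{\sigma}_i$ down.
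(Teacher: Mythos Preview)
Your argument is correct and follows essentially the same line as the paper's. Both proofs handle the diagonal-covariance and $j\neq i$ parts identically (same marginal on $e_i^\perp$, symmetry in $e_i$), both get the upper bound from rearrangement, and both derive the lower bound by normalizing to isotropic and applying Fact~\ref{szacowanie_gestosci} twice. The only cosmetic differences are: (i) you first reduce to isotropic $\mi$ and then work with the value $\tilde g(0)$, whereas the paper keeps $\mi$ general, uses that $\sup g=\sup\tilde g$, and compares the two isotropic normalizations via the ratio of determinants; (ii) you use the lower bound $g(0)\geq c(n)$ from Fact~\ref{szacowanie_gestosci} while the paper uses the two-sided bound on $\sup g$. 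These lead to the same conclusion by the same mechanism.
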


\begin{proof} First notice that the joint distribution of all $\is{X}{e_j}$ for $j \neq i$ is the same for $X$ distributed according to $\mi$ and to $\tilde{\mi}$, as the projections of these two measures onto $\spann \{e_j, j\neq i\}$ is the same --- thus all but the $i$--th row and column of the respective covariance matrices are the same. Furthermore as $\tilde{g}$ is symmetric with respect to the hyperplane $\spann \{e_j, j\neq i\}$, we have $\E_{\tilde{\mi}} \is{X}{e_i}\is{X}{e_j} = 0$ for $i \neq j$. Also $\E_{\tilde{\mi}} \is{X}{e_i}^2 = \int_{e_i^{\perp}} \int_\R x^2 \tilde{g}(v + xe_i) dx dv \leq \int_{e_i^{\perp}} \int_\R x^2 g(v + xe_i) dx dv$ by the monotone rearrangement inequality.

Consider the diagonal map $T_\mi$ (resp. $T_{\tilde{\mi}}$) transforming the measure $\mi$ to an isotropic measure. The $i$--th entry on the diagonal of the matrix of $T_\mi$ is equal to $1\slash \sqrt{a_{ii}}$, where $a_{ii}$ (resp. $\tilde{a}_{ii}$) is the $i$--th entry on the diagonal of the covariance matrix of $\mi$. Let $M_{\mi}$ (resp. $M_{\tilde{\mi}}$) denote the supremum of the density of $\mi \circ T_\mi^{-1}$ (resp. $\tilde{\mi} \circ T_{\tilde{\mi}}^{-1}$), and let $M$ denote the common supremum of the densities of $\mi$ and $\tilde{\mi}$. We have $M_\mi = M \slash \det T_\mi$, and $M_{\tilde{\mi}} = M \slash \det T_{\tilde{\mi}}$, so 
$$M_{\mi} \slash M_{\tilde{\mi}} = \det T_\mi \slash \det T_{\tilde{\mi}} = \sqrt{\tilde{a}_{ii}} \slash \sqrt{a_{ii}} = \sqrt{\E_{\tilde{\mi}} \is{X}{e_i}^2 \slash \E_{\mi} \is{X}{e_i}^2},$$ where the middle equality follows as all eigenvalues of $\cov \mi$ and $\cov \tilde{\mi}$ except the $i$--th are equal.

On the other hand both $\mi \circ T_\mi$ and $\tilde{\mi} \circ T_{\tilde{\mi}}$ are isotropic log--concave measures, thus by Fact \ref{szacowanie_gestosci} we have $M_{\mi} \slash M_{\tilde{\mi}} \geq c(n) \slash C(n)$. This ends the proof.\end{proof}

\begin{prop} \label{supremumbounds}Let $\mi$ be a probabilistic log--concave measure with density $g$ and a diagonal covariance matrix. For $z \in e_i^\perp$ let $g'(z) = \sup_{x \in \R} g(z + re_i)$. Then:
\begin{itemize}
\item $g'(z)$ is a log--concave function on $e_i^\perp$;
\item $c(n) \slash \sqrt{\E \is{X}{e_i}^2} \leq \int_{e_i^\perp} g'(z) \leq C(n) \slash \sqrt{\E \is{X}{e_i}^2}$.
\end{itemize}
\end{prop}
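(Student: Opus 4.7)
My approach is to identify $g'$ with the restriction to $e_i^\perp$ of the symmetrization $\tilde g$ of $g$ in the direction $e_i$ (as defined just before Proposition \ref{logsym}). For each fixed $v \in e_i^\perp$, the one--dimensional function $r \mapsto \tilde g(v+re_i)$ is by construction symmetric, decreasing on $\R_+$ and upper semi--continuous, so its value at $0$ is its maximum; since it is equidistributed with $r \mapsto g(v+re_i)$, this maximum coincides with $\sup_r g(v+re_i) = g'(v)$ (using that a log--concave density is upper semi--continuous so that its essential supremum and supremum agree on each line). Hence $\tilde g|_{e_i^\perp} = g'$.

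Granted this identification, the first bullet is immediate: by Proposition \ref{logsym} the function $\tilde g$ is log--concave on all of $\R^n$, and the restriction of a log--concave function to any affine subspace is log--concave.

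For the second bullet I would write
$$\int_{e_i^\perp} g'(z)\,dz = \int_{e_i^\perp} \tilde g(z)\,dz = h(0),$$
where $h(x) := \int_{e_i^\perp} \tilde g(v+xe_i)\,dv$ is the density of the one--dimensional marginal of $\tilde\mi$ on the line $\R e_i$. By Proposition \ref{symmet} the measure $\tilde\mi$ is a log--concave probability with diagonal covariance whose $(i,i)$--entry $\tilde a_{ii}$ satisfies $c(n)\,\E_\mi \is{X}{e_i}^2 \leq \tilde a_{ii} \leq \E_\mi \is{X}{e_i}^2$. The marginal $h$ is one--dimensional log--concave and probabilistic; since $\tilde g$ is symmetric in the $e_i$--direction by construction, $h$ is symmetric, hence centered, with variance $\tilde a_{ii}$. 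Rescaling by $1/\sqrt{\tilde a_{ii}}$ turns $h$ into an isotropic one--dimensional log--concave density, so the $n=1$ case of Fact \ref{szacowanie_gestosci} yields $c/\sqrt{\tilde a_{ii}} \leq h(0) \leq C/\sqrt{\tilde a_{ii}}$. Substituting the two--sided comparison between $\tilde a_{ii}$ and $\E_\mi \is{X}{e_i}^2$ from Proposition \ref{symmet} produces the claimed estimates, the upper bound picking up the dimension--dependent constant.

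The only really delicate point I foresee is the identification $g' = \tilde g|_{e_i^\perp}$, where one must carefully invoke upper semi--continuity in the definition of the symmetrization to pin down its value at the origin on each fibre; once that is handled, everything else is a routine assembly of Propositions \ref{logsym} and \ref{symmet} with the one--dimensional case of Fact \ref{szacowanie_gestosci}.
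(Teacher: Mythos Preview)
Your argument is correct and follows the same line as the paper: identify $g'$ with $\tilde g|_{e_i^\perp}$, deduce log--concavity from Proposition~\ref{logsym}, and obtain the integral bounds by making the relevant marginal of $\tilde\mi$ isotropic and invoking the density estimate. The only cosmetic difference is that the paper rescales all of $\tilde\mi$ to isotropic and applies Fact~\ref{ciecie_plaszczyzna} to the hyperplane $e_i^\perp$, whereas you project $\tilde\mi$ onto $\R e_i$ first and use the one--dimensional case of Fact~\ref{szacowanie_gestosci} directly; since the proof of Fact~\ref{ciecie_plaszczyzna} itself reduces to exactly this one--dimensional step, the two computations are identical.
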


\begin{proof}
Let $\tilde{g}$ be the symmetrization of $g$ in direction $e_i$. Then $\tilde{g}(z,0) = g'(z)$ --- both are equal to $\sup_{x \in \R} g(z + re_i)$. Thus by Proposition \ref{logsym} $g'$ is log--concave. By Proposition \ref{symmet} the measure with density $\tilde{g}$ has a diagonal covariance matrix, as before consider a diagonal map $T$ which transforms this measure into an isotropic one, let $h$ be the density of this isotropic measure. By Fact \ref{ciecie_plaszczyzna} we have that $c(n) \int_{e_i^\perp} h \leq C(n)$. Now the $i$--th entry on the diagonal of $T$ is $\sqrt{\E \is{X}{e_i}^2}$, thus this is the factor by which the mass on the hyperplane $e_i^\perp$ is changed by $T$. 
\end{proof}

\subsection{The Lipschitz invariant}

\begin{prop} \label{addsections} Let $X_1$ and $X_2$ be two independent random variables in $\R^n$ satisfying $$ \forall_{\theta : \|\theta\|_2 = 1} \ c < \E \is{X_i}{\theta}^2 < C$$ for some positive constants $c,C$. Let $W$ and $V$ be two diagonal matrices with $W^2 + V^2 = \Id$. Then $c < \E \is{W X_1 + V X_2}{\theta}^2 < C$ \end{prop}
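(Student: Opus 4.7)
The plan is to expand $\E\is{WX_1 + VX_2}{\theta}^2$ using that $W, V$ are symmetric (being diagonal) and that $X_1, X_2$ are independent. Writing $\is{WX_i}{\theta} = \is{X_i}{W\theta}$ and expanding the square, independence yields
\[
\E\is{WX_1 + VX_2}{\theta}^2 = \E\is{X_1}{W\theta}^2 + \E\is{X_2}{V\theta}^2 + 2\,\E\is{X_1}{W\theta}\cdot\E\is{X_2}{V\theta}.
\]
I would control the first two summands by applying the hypothesis to the unit vectors $W\theta/\|W\theta\|_2$ and $V\theta/\|V\theta\|_2$ (the trivial zero cases being immediate), yielding $c\|P\theta\|_2^2 \le \E\is{X_i}{P\theta}^2 \le C\|P\theta\|_2^2$ for $P\in\{W,V\}$. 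The key geometric identity $\|W\theta\|_2^2 + \|V\theta\|_2^2 = \theta^\top(W^2+V^2)\theta = 1$ then sandwiches the sum of these two diagonal contributions between $c$ and $C$.

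For the upper bound, the cross term is controlled via Cauchy--Schwarz: $|\E\is{X_i}{P\theta}| \le \sqrt{C}\,\|P\theta\|_2$, giving $|2\,\E\is{X_1}{W\theta}\E\is{X_2}{V\theta}| \le 2C\|W\theta\|_2\|V\theta\|_2 \le C$ by AM--GM, so the total second moment is bounded by $2C$ and the upper estimate follows after renaming the constant.

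The lower bound is the main obstacle, since a priori the cross term may be negative and could wipe out the diagonal terms. The natural remedy is centering: if $\E X_1 = \E X_2 = 0$, then $\E\is{X_i}{P\theta} = 0$, the cross term vanishes, and what remains is exactly $\E\is{WX_1+VX_2}{\theta}^2 \ge c(\|W\theta\|_2^2+\|V\theta\|_2^2) = c$. In the context of this section the $X_i$ arise from the symmetrizations constructed in Proposition \ref{symmet} and are therefore even in each coordinate, so centering is automatic. Before invoking this I would flag that without some such hypothesis the lower bound genuinely fails (e.g.\ $X_1 \equiv +e_1$, $X_2 \equiv -e_1$, $W = V = \Id/\sqrt{2}$, $\theta = e_1$ satisfy the second-moment hypothesis yet give $\E\is{WX_1+VX_2}{\theta}^2 = 0$), so the centering reduction is doing the real work.
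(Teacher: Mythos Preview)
Your argument is essentially the paper's, written in scalar rather than matrix form. The paper sets $G_i=\cov X_i$, writes $\cov(WX_1+VX_2)=WG_1W+VG_2V$ by independence, and then bounds $\is{G_iP\theta}{P\theta}$ between $c\|P\theta\|_2^2$ and $C\|P\theta\|_2^2$, combined with $\|W\theta\|_2^2+\|V\theta\|_2^2=1$; this is exactly your expansion with the cross term already dropped.

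You are right to flag centering, and in fact more careful than the paper here: the paper simultaneously reads the hypothesis as $\E\is{X_i}{\theta}^2=\theta^\top G_i\theta$ (raw second moment) and uses the additivity $\cov(WX_1+VX_2)=WG_1W+VG_2V$ (which needs centered covariances), so a mean-zero assumption is implicit there too. Under that assumption your cross term vanishes and you recover the sharp constants $c$ and $C$, matching the paper exactly; without it your route gives only $2C$ above, which would still suffice for the sole application in Lemma~\ref{mainlemma}. One small correction to your counterexample: for $n\geq 2$ the constants $X_i\equiv\pm e_1$ do \emph{not} satisfy the hypothesis, since $\E\is{X_i}{\theta}^2=0$ whenever $\theta\perp e_1$. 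The example is valid as stated for $n=1$, and in higher dimensions after adding a small isotropic perturbation (e.g.\ $X_i=\pm t e_1+\eps G_i$ with $G_1,G_2$ independent standard Gaussians and $t\gg\eps$).
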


\begin{proof}
Let $G_1$ and $G_2$ denote the covariance matrices of $X_1$ and $X_2$, respectively. The assumption upon $\E \is{X_i}{\theta}^2$ means simply that all the eigenvalues of $G_1$ and $G_2$ lie in the interval $[c,C]$.

We have $\Cov (W X_1 + V X_2) = \Cov (W X_1) + \Cov (V X_2) = W G_1 W + V G_2 V$.
Take any vector $\theta$ of norm $1$. Then $\is{W G_1 W \theta}{\theta} = \is{G_1 W \theta}{W\theta}$. As all the eigenvalues of $G_1$ are no smaller than $c$, we have $\is{G_1 v}{v} \geq c \|v\|^2$ for any $v$, thus $\is{W G_1 W \theta}{\theta} \geq c \|W\theta\|^2$. Similarly $\is{V G_2 V \theta}{\theta} \geq c \|V\theta\|^2$. As $V$ and $W$ are diagonal, we have $\|W \theta\|^2 = \sum w_{ii}^2 \theta_i^2$, and as $W^2 + V^2 = \Id$, we have $\|W \theta\|^2 + \|V\theta\|^2 = \sum (w_{ii}^2 + v_{ii}^2) \theta_i^2 = \sum \theta_i^2 = \|\theta\|^2 = 1$. Thus $$\|\Cov (W X_1 + V X_2) \theta\| \geq \is{\Cov (W X_1 + V X_2) \theta}{\theta} \geq c,$$ so in particular all the eigenvalues of $\Cov (W X_1 + V X_2)$ must be greater or equal $c$ (they are all positive reals, as eigenvalues of a covariance matrix). A similar argument gives the upper bound on the eigenvalues.\end{proof}

\begin{lem} \label{mainlemma} Let $g$ and $h$ be the densities of two independent log--concave random variables $X$ and $Y$ in $\R^n$ with diagonal covariance matrices satisfying $\Cov (X+Y) = \Id$. Let  $f'(v) = \sup_{t \in \R} f(v + te_i)$ for $f \in \{g,h\}$ and $v \in e_i^{\perp}$. Then $$\int_{e_i^\perp} g'(v) h'(z-v) dv \leq C(n) \slash \sqrt{\E \is{X}{e_i}^2 \E \is{Y}{e_i}^2}$$ for some constant $C(n)$ dependent only on $n$.\end{lem}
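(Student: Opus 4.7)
My plan is to identify $(g'*h')(z)$ as a single value of the joint density of the log-concave random vector $(\is{X}{e_i},\,X+Y)\in\R^{n+1}$, and to bound that density via Fact~\ref{szacowanie_gestosci} after a short Schur-complement calculation showing that the determinant of its covariance matrix equals $\E\is{X}{e_i}^2\cdot\E\is{Y}{e_i}^2$, precisely the denominator in the desired bound. The only subtlety I expect is the bookkeeping in the reduction to the symmetric case: it loses a factor $c(n)$ that must be absorbed into the final $C(n)$, but the Schur computation depends only on the diagonality of the covariance matrices and the independence $\Cov(X,Y)=0$, both of which are preserved by symmetrisation.

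I begin with an $e_i$-symmetrisation. Replacing $g,h$ by their $e_i$-symmetrisations $\tilde g,\tilde h$ leaves $g'$ and $h'$ unchanged -- the symmetric rearrangement in one direction places its supremum at $0$, so $\tilde g(v,0)=\sup_s g(v+se_i)=g'(v)$ -- and by Proposition~\ref{symmet} it preserves diagonality of the covariance matrices and their off-$e_i$ diagonal entries, while leaving the $e_i$-entry at least $c(n)$ times its original value. Absorbing this $c(n)$ into the final constant, I relabel the symmetrised vectors again as $X,Y$ and thus assume $g(v,s)=g(v,-s)$ and $h(v,s)=h(v,-s)$ in the decomposition $x=v+se_i$ with $v\in e_i^\perp$; then $g'(v)=g(v,0)$, $h'(v)=h(v,0)$, and the identity $\sigma_{X,j}^2+\sigma_{Y,j}^2=1$ for $j\neq i$ is inherited from the original $\Cov(X+Y)=\Id$.

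Under this symmetry $(g'*h')(z)=\int g(v,0)h(z-v,0)\,dv$. Computing the joint density of $(X_i,X+Y)$ from the product density $g(x)h(y)$ of $(X,Y)$ by integrating out the $e_i^\perp$-component of $X$ yields
\[
f_{(X_i,X+Y)}(s,w)=\int g(v,s)\,h(w_{-i}-v,\,w_i-s)\,dv,
\]
so $(g'*h')(z)=f_{(X_i,X+Y)}(0,(0,z))$. The pair $(X,Y)$ is log-concave on $\R^{2n}$ (product of log-concave densities), hence its linear image $(X_i,X+Y)$ is log-concave on $\R^{n+1}$ by Pr\'ekopa--Leindler. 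In the $1\oplus n$ block form with $X_i$ first, its covariance is
\[
\begin{pmatrix}\alpha^2 & \alpha^2 e_i^T\\ \alpha^2 e_i & \Cov(X+Y)\end{pmatrix},\qquad \alpha^2:=\E\is{X}{e_i}^2,
\]
by diagonality of $\Cov X$ and independence. Since $\Cov(X+Y)$ is diagonal with all off-$e_i$ entries equal to $1$, the Schur complement $\Cov(X+Y)-\alpha^2 e_i e_i^T$ is diagonal with $i$-th entry $\Cov(X+Y)_{ii}-\alpha^2=\beta^2:=\E\is{Y}{e_i}^2$ and remaining entries equal to $1$; its determinant is $\beta^2$, so the full determinant equals $\alpha^2\beta^2$. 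Fact~\ref{szacowanie_gestosci} applied after isotropic rescaling of $(X_i,X+Y)$ then yields $\|f_{(X_i,X+Y)}\|_\infty\le C(n)/\sqrt{\alpha^2\beta^2}=C(n)/(\alpha\beta)$, which evaluated at the point $(0,(0,z))$ is the claim.
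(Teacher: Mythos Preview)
Your argument is correct and, in fact, cleaner than the route taken in the paper. Both proofs begin with the same $e_i$--symmetrisation (via Proposition~\ref{symmet}) to rewrite $g'(v)=\tilde g(v,0)$ and $h'(v)=\tilde h(v,0)$. From there the paper works on the hyperplane $e_i^\perp$: it separately isotropises each symmetrised measure, restricts the densities to $e_i^\perp$, normalises them to obtain auxiliary variables $X_5,Y_5$, and then invokes Proposition~\ref{addsections} to pin the eigenvalues of $\Cov(X_5+Y_5)$ into a fixed interval $[c,C]$, finally bounding the convolution density via Fact~\ref{szacowanie_gestosci} in dimension $n-1$. Your approach replaces this entire $(n-1)$--dimensional machinery by lifting to dimension $n+1$: recognising
\[
\int_{e_i^\perp} g'(v)\,h'(z-v)\,dv \;=\; f_{(X_i,\,X+Y)}\bigl(0,(z,0)\bigr)
\]
lets a single Schur--complement calculation produce $\det\Cov(X_i,X+Y)=\alpha^2\beta^2$ directly, after which Fact~\ref{szacowanie_gestosci} in dimension $n+1$ finishes. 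In particular you never need Proposition~\ref{addsections}; the diagonality of $\Cov X,\Cov Y$ and the independence $\Cov(X,Y)=0$ are exactly what the Schur step consumes, so the bookkeeping is minimal.

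Two small cosmetic points worth recording if you write this out in full. First, the lemma does not assume $X,Y$ centred; since $\sup f_{(X_i,X+Y)}$ and the right--hand side are both translation--invariant, one may centre before applying Fact~\ref{szacowanie_gestosci} without loss of generality. Second, after symmetrisation the $e_i$--entry of $\Cov(X+Y)$ is no longer exactly $1$ but $\tilde\alpha^2+\tilde\beta^2\in[c(n),1]$; your Schur calculation handles this correctly (it only uses $\Cov(X+Y)_{ii}=\alpha^2+\beta^2$ for the relabelled variances and $\Cov(X+Y)_{jj}=1$ for $j\neq i$), but it is worth making explicit that the ``$\Id$'' in the hypothesis is used only through these surviving identities.
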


\begin{proof}
Begin by considering the symmetrization $g_1$ of $g$ with respect to $e_i$. Let this be the density of the variable $\tilde{X}$. By Lemma \ref{symmet} $\tilde{X}$ has a diagonal covariance matrix $G$, and $G^{-1\slash 2} \tilde{X}$ is isotropic.

Let $g_2$ be the density of $G^{-1\slash 2} \tilde{X}$. Consider the restriction  of $g_2$ to $e_i^{\perp}$. Let $c_2 :=\int_{e_i^\perp} g_2$ and let $g_3 := g_2 \slash c_2$, $g_3$ is a density of a probability measure on $e_i^\perp$, it is also obviously log--concave. Let $X_3$ be the random variable distributed according to $g_3$. By Fact \ref{ciecie_plaszczyzna} the eigenvalues of $\Cov X_3$ are in some universal interval $[c, C]$ and $c < c_2 < C$.

Let $g_4$ be the restriction of $g_1$ to $e_i^{\perp}$, and $c_4 := \int_{e_i^\perp} g_4$ and $g_5 := g_4 \slash c_4$, finally let $X_5$ be the random variable distributed according to $g_5$. Note that $g'$ is equal to $g_4$.

By the construction above and by Lemma \ref{symmet} we have \begin{equation}\label{e1}c_2 \slash c_4 = \sqrt{G_{ii}} \geq c(n) \sqrt{(\Cov X)_{ii}}.\end{equation} Also let $G'$ be the restriction of $G$ onto $e_i^{\perp}$ (that is, the matrix obtained from $G$ by deleting the $i$--th row and column). Then $X_5 = G'^{1\slash 2} X_3$. We perform similar operations on $h$, to receive \begin{equation}\label{e2}d_2 \slash d_4 \geq c(n) \sqrt{(\cov Y)_{ii}},\end{equation} $Y_5 = H'^{1\slash 2} Y_3$ and the eigenvalues of $\Cov Y_3$ are in the same interval $[c, C]$. 

We are now in the situation of Proposition \ref{addsections}. We have random variables $X_3$ and $Y_3$, satisfying $c < \E \is{X_3}{\theta}^2 < C$ for $\|\theta\| = 1$, and the same for $Y_3$, and matrices $G'^{1\slash 2}$ and $H'^{1\slash 2}$, whose squares sum up to the identity matrix by Proposition \ref{symmet}. Thus the variable $X_5 + Y_5$ satisfies the conclusion of Proposition \ref{addsections}, that is all the eigenvalues of its covariance matrix lie in the interval $[c, C]$.

The integral we consider, $\int_{e_i^\perp} g'(v) h'(z-v)$, is equal to $\frac{d_4c_4}{d_2c_2}$ times the density of the variable $X_5 + Y_5$ at $z$. Thus by (\ref{e1}) and (\ref{e2}) we only have to prove the density of $X_5 + Y_5$ is bounded by a constant dependent on $n$.

Let $M$ be such a diagonal matrix that $M (X_5 + Y_5)$ is isotropic, that is $M = \Cov(X_5+Y_5)^{-1\slash 2}$. Then the density of $M(X_5 + Y_5)$ is bounded from above by $C(n)$ by Fact \ref{szacowanie_gestosci}, and the supremum of the density of $X_5 + Y_5$ is equal to the supremum of the density of $M(X_5 + Y_5)$ multiplied by $\det M = \det \Cov(X_5+Y_5)^{-1\slash 2} \leq c^{-n\slash 2}$.
\end{proof}

And now for the final result of this section:

\begin{Def}
We say a function $f :\R^n \to \R$ is {\em Lipschitz in direction $\theta$} (where we assume $\|\theta\|_2 = 1$) with the constant $L$ if for any $x \in \R^n$ and $t \in \R$ we have $|f(x+t\theta) - f(x)| \leq L|t|$.
\end{Def}

\begin{prop} \label{mainprop} Let $X$ and $Y$ be two such independent log--concave random variables in $\R^n$ with diagonal covariance matrices that $X+Y$ is isotropic. Then if $\E \is{X}{e_i}^2$ and $\E \is{Y}{e_i}^2$ are both positive, then the density of $X+Y$ is Lipschitz in direction $e_i$ with the Lipschitz constant bounded by $C(n) \slash \sqrt{\E \is{X}{e_i}^2 \E \is{Y}{e_i}^2}$\end{prop}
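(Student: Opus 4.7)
The density of $X+Y$ is the convolution $g*h$, so the task is to bound $|(g*h)(z + te_i) - (g*h)(z)| \le L|t|$ uniformly in $z \in \R^n$ and $t \in \R$, with $L = C(n)/\sqrt{\E\is{X}{e_i}^2 \E\is{Y}{e_i}^2}$. The plan is to slice along lines parallel to $e_i$, reduce the problem to a one--dimensional convolution estimate on each slice, and then integrate perpendicularly using Lemma \ref{mainlemma}. Writing $z = z_\perp + z_i e_i$ and $v = a + se_i$ with $a, z_\perp \in e_i^\perp$, Fubini gives
$$(g*h)(z) = \int_{e_i^\perp} (\phi_a * \chi_{z_\perp - a})(z_i)\,da,$$
where $\phi_a(s) := g(a + se_i)$ and $\chi_b(s) := h(b + se_i)$ are log--concave functions on $\R$, integrable for a.e.\ $a,b$, with suprema $\sup \phi_a = g'(a)$ and $\sup \chi_b = h'(b)$ in the notation of Lemma \ref{mainlemma}.

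The core one--dimensional estimate I would prove is: for any log--concave integrable functions $\phi, \chi$ on $\R$ and any $r, t \in \R$,
$$|(\phi*\chi)(r+t) - (\phi*\chi)(r)| \le 2|t|\,\sup\phi \cdot \sup \chi.$$
The reason is that a log--concave integrable function on $\R$ is unimodal and decays to $0$ at $\pm\infty$, so its total variation is $2\sup \phi$. Writing $\phi(r+t-s) - \phi(r-s) = \int_{r-s}^{r+t-s} \phi'(u)\,du$ (valid by absolute continuity on the interior of the support), Fubini recasts the left--hand side as $\int \phi'(u) \int_{r-u}^{r+t-u}\chi(s)\,ds\,du$; bounding the inner integral by $|t|\sup\chi$ and using $\int |\phi'| = 2\sup \phi$ then yields the claim.

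Applying this slicewise with $\phi=\phi_a$ and $\chi=\chi_{z_\perp-a}$ and integrating over $a \in e_i^\perp$ gives
$$|(g*h)(z + te_i) - (g*h)(z)| \le 2|t|\int_{e_i^\perp} g'(a) h'(z_\perp - a)\,da,$$
and Lemma \ref{mainlemma} bounds the right--hand side by $2|t|\, C(n)/\sqrt{\E\is{X}{e_i}^2 \E\is{Y}{e_i}^2}$, uniformly in $z_\perp$ and $z_i$, which is precisely the required Lipschitz inequality.

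The main technical obstacle I foresee is justifying the slicewise fundamental theorem of calculus and the total--variation identity for the 1--D sections $\phi_a$; these follow from integrability plus unimodality of log--concave functions on $\R$, but some care is needed to handle possible boundary jumps and to license the Fubini interchange. Once these technicalities are dispatched, the argument is a clean reduction to the perpendicular convolution bound already proved as Lemma \ref{mainlemma}.
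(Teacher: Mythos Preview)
Your proposal is correct and follows essentially the same route as the paper: slice along $e_i$, obtain the pointwise bound $2|t|\,g'(a)h'(z_\perp-a)$ on each fibre, integrate over $e_i^\perp$, and invoke Lemma~\ref{mainlemma}. The only substantive difference is that the paper first convolves $X$ and $Y$ with $\eps G$ to guarantee they have densities (and then lets $\eps\to 0$), and derives the one--dimensional bound by directly computing $\int_\R |h(\tilde v+(t+s)e_i)-h(\tilde v+te_i)|\,dt \le 2|s|\,h'(\tilde v)$ from unimodality rather than via the total--variation/FTC argument you use; both variants yield the same constant $2$, and the paper's version sidesteps the distributional--derivative technicality you flag.
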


\begin{proof}
We may assume $X$ and $Y$ have densities $g$ and $h$ respectively by convoluting each with a gaussian variable $\eps G$ (where $G \sim \mathcal{N}(0,\Id)$), when $\eps \ra 0$, the density of $X + Y + \eps G + \eps G$ tends to the density of $X+Y$, and the Lipschitz constant is preserved under pointwise convergence.

The density $f$ of $X + Y$ is the convolution of $g$ and $h$. For any $v \in \R^n$ we denote its decomposition into $e_i^\perp$ and $\spann\{e_i\}$ by $v = \tilde{v} + t_v e_i$. We have 
$$f(x) = \int_{\R^n} g(y) h(x-y) dy = \int_{e_i^\perp} \int_{\R} g(\tilde{y} + t_y e_i) h(\tilde{x} - \tilde{y} + (t_x - t_y) e_i) dt_y d\tilde{y},$$
and so the difference $|f(x+se_i) - f(x)|$ is equal to 
$$\Bigg|\int_{e_i^\perp} \int_{\R} g(\tilde{y} + t e_i) \Big[h(\tilde{x} - \tilde{y} + (t_x + s - t_y) e_i) - h(\tilde{x} - \tilde{y} + (t_x - t_y) e_i)\Big] dt_y d\tilde{y}\Bigg|.$$
Let $g'(\tilde{v}) = \sup_{t\in\R} g(\tilde{v}+te_i)$ and $h'(\tilde{v}) = \sup_{t\in\R}(\tilde{v}+te_i)$. Then we have
\begin{align*} |f(x+se_i) - f(x)| \leq& \int_{e_i^\perp} \int_{\R} g(\tilde{y} + t e_i)  \Big|h(\tilde{x} - \tilde{y} + (t_x + s - t_y) e_i)  \\  & - h(\tilde{x} - \tilde{y} + (t_x - t_y) e_i)\Big| dt_y d\tilde{y} \\
 \leq& \int_{e_i^\perp} g'(\tilde{y}) \int_{\R} \Big|h(\tilde{x} - \tilde{y} + (t_x + s - t_y) e_i) \\  & - h(\tilde{x} - \tilde{y} + (t_x - t_y) e_i)\Big| dt_y d\tilde{y}.\end{align*}
Notice that $h(\tilde{v} + te_i)$ is log--concave, and thus bimonotonous, as a function of $t$. Thus the function $h(\tilde{v} + (t+s)e_i) - h(\tilde{v} + te_i)$ for positive $s$ has a single zero at some $t_0$, is non--negative for $t < t_0$ and non--positive for $t > t_0$. Thus 
\begin{align*}\int_\R & |h(\tilde{v} + (t+s)e_i) - h(\tilde{v} + te_i)| dt  \leq \int_{-\infty}^{t_0}h(\tilde{v} + (t+s)e_i) - h(\tilde{v} + te_i) dt \\ &- \int_{t_0}^\infty h(\tilde{v} + (t+s)e_i) - h(\tilde{v} + te_i) dt
= 2 \int_{t_0}^{t_0+s} h(\tilde{v} + te_i) \leq 2|s|h'(\tilde{v}).
\end{align*}
Applying this to our case we have
\begin{align*} |f(x+se_i) - f(x)| &\leq \int_{e_i^\perp} g'(\tilde{y}) 2 |s| h'(\tilde{x} - \tilde{y}) d\tilde{y}.\end{align*}

Here, however, we may apply Lemma \ref{mainlemma} to conclude the proof.\end{proof}

\subsection{Closed classes}

We begin by demonstrating a simple structural proposition:

\begin{prop} \label{structure} Let $\KK$ be a class of log--concave measures. Let $\bKK$ be the smallest class of log--concave measures containing $\KK$, which is closed under products, linear transformations and weak limits. Let $\mi$ be any isotropic measure in $\bKK$. Then there exists a sequence of measures $\mi_1,\mi_2,\ldots$, each of which is a linear image of the tensor product of finitely many of measures from $\KK$, such that $\mi$ is the weak limit of $\mi_i$, and each $\mi_i$ is isotropic.\end{prop}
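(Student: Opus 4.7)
The plan is \emph{structural}. I would introduce the intermediate class
\[
\bKK' = \bigl\{T_*(\nu_1 \otimes \cdots \otimes \nu_k) : k \geq 1,\ \nu_j \in \KK,\ T\ \text{linear}\bigr\}
\]
of linear images of finite tensor products of measures from $\KK$, and let $\LL$ denote its weak sequential closure. Since $\KK \subseteq \bKK' \subseteq \LL \subseteq \bKK$, the first task is to show $\LL = \bKK$ by verifying that $\LL$ is closed under all three operations.

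Closure of $\LL$ under linear maps is immediate, since composing a fixed linear $S$ with each $T_i$ in a defining sequence preserves membership in $\bKK'$, and pushforward by $S$ is weakly continuous. Closure under tensor products follows because $\bKK'$ is itself already closed under tensor products, via
\[
T_*(\nu_1 \otimes \cdots \otimes \nu_k) \otimes T'_*(\nu'_1 \otimes \cdots \otimes \nu'_l) = (T \oplus T')_*(\nu_1 \otimes \cdots \otimes \nu_k \otimes \nu'_1 \otimes \cdots \otimes \nu'_l),
\]
combined with the joint weak continuity of $\otimes$. Closure under weak limits then comes from a standard diagonal extraction, using that weak convergence of probability measures on each $\R^n$ is metrizable.

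Given an isotropic $\mi \in \bKK = \LL$, I would select a defining sequence $\mi_i \in \bKK'$ with $\mi_i \to \mi$ weakly. Passing to a subsequence I may assume each $\mi_i$ is non-degenerate: otherwise the $\mi_i$ lie on proper affine subspaces, and by compactness of the Grassmannian a subsequential limit of those subspaces would force $\mi$ to be concentrated on a proper subspace, contradicting its isotropy. Letting $S_i$ be the affine map putting $\mi_i$ into isotropic position---translation by $-\E_{\mi_i}X$ followed by multiplication by $\Cov(\mi_i)^{-1/2}$---I would set $\mi_i' = (S_i)_*\mi_i$. Each $\mi_i'$ is isotropic by construction and remains of the required form (the paper's convention on linear images is effectively affine, consistent with the diagonal isotropizing maps used in Proposition~\ref{symmet}).

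It remains to verify $\mi_i' \to \mi$ weakly. Since $\mi$ is already in isotropic position, this reduces to showing $\E_{\mi_i}X \to 0$ and $\Cov(\mi_i) \to \id$; then $S_i \to \id$ as affine maps, and the pushforwards converge. The hard part will be this convergence of the first two moments: for log--concave measures, weak convergence to a non-degenerate log--concave limit does entail convergence of all polynomial moments, via uniform pointwise convergence of the densities on compact sets combined with uniform sub--exponential tail bounds characteristic of the log--concave setting. This moment--convergence step is the single substantive obstacle; everything else is structural bookkeeping.
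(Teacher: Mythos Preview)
Your proposal is correct and follows essentially the same route as the paper. The paper also introduces the class generated by products and linear maps, shows its weak closure equals $\bKK$ via metrizability (the Levy metric), and then isotropizes an approximating sequence using continuity of the covariance under weak convergence of log--concave measures. Two minor differences: you define $\bKK'$ directly as linear images of tensor products, whereas the paper first takes the class closed under both operations and only at the end observes, by structural induction via $(\mi \circ S^{-1}) \otimes (\nu \circ T^{-1}) = (\mi \otimes \nu) \circ (S \otimes T)^{-1}$, that every element has that form---your formulation just front-loads this identity. You are also more careful than the paper about centering (affine versus linear) and about non-degeneracy of the approximants; the paper simply writes $\nu_k = \mi_k \circ (\cov \mi_k)^{-1/2}$ and asserts these are isotropic without addressing either point explicitly.
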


\begin{proof}
First let $\LL$ be the smallest class of log--concave measures containing $\KK$, which is closed under products and linear transformations. We shall prove any member of $\bKK$ is a weak limit of some sequence in $\LL$. Let $\bLL$ be the set of all weak limits of sequences in $\LL$. We will prove $\bLL$ is closed under products, linear transformations and weak limits, thus $\bLL = \bKK$. Let $\mi_1,\mi_2 \in \bLL$, let $\mi_1 = \lim \nu_n$, $\mi_2 = \lim \omega_n$. Then $\mi_1 \otimes \mi_2 = \lim \nu_n \otimes \omega_n$. 
Similarly, if $T$ is a linear transformation, then $(\lim \nu_n) \circ T^{-1} = \lim (\nu_n \circ T^{-1})$. 
Thus $\bLL$ is closed under products and linear transformations. Now recall that the space of probabilistic measures with the weak convergence can be given as a metric space with the Levy metric. Thus the closure of any set with respect to weak limits is simply the closure in the Levy metric, and this is precisely the set of all weak limits of sequences in this set. Thus $\bLL = \bKK$.

Let $\mi = \lim \mi_k$, $\mi_k \in \LL$. Let $\nu_k = \mi_k \circ (\cov \mi_k)^{-1\slash 2}$. As $\mi$ is isotropic and the covariance matrix is continuous for log--concave measures, $\cov \mi_k \ra \Id$ and thus also $\nu_k \ra \mi$. The measures $\nu_k$ are isotropic.

Now we have to consider $\nu_k$, which are elements of $\LL$. We prove each $\nu_k$ is the linear image of a product by structural induction, exchanging all linear transformations with products, as $(\mi \circ S^{-1}) \otimes (\nu \circ T^{-1}) = (\mu \otimes \nu) \circ (S \otimes T)^{-1}$. \end{proof}

\begin{lem} \label{liplimit} Let $\mi_n$ be a sequence of probabilistic isotropic log--concave measures with densities $f_n$, weakly convergent to some $\mi$. Assume each $f_n$ is Lipschitz in some direction $v_n$ with the same constant $L$. Then $\mi$ has a density $f$ that is Lipschitz with the constant $L$ in some direction $v$.\end{lem}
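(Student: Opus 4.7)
My plan is to extract a subsequence along which the directions $v_n$ converge to some $v$, verify that the weak limit $\mi$ is isotropic log--concave with a density $f$, upgrade weak convergence of the measures to local uniform convergence of their densities on the interior of $\supp \mi$, and then transfer the Lipschitz inequality to $f$.

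By compactness of $S^{n-1}$, pass to a subsequence with $v_n \to v$. Isotropic log--concave measures on $\R^n$ have uniformly exponentially decaying tails, so $\|x\|^2$ is uniformly $\mi_n$--integrable; thus $\cov\mi_n \to \cov\mi$, and since $\cov\mi_n = \Id$ the limit $\mi$ is isotropic. Log--concavity is preserved by weak limits, so by Borell $\mi$ has a log--concave density $f$ and $\Omega := \mathrm{int}(\supp\mi)$ is an open convex set containing $0$.

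The heart of the proof is to show $f_n \to f$ locally uniformly on $\Omega$. By Fact \ref{szacowanie_gestosci}, $f_n \leq C(n)$ uniformly. For a uniform lower bound on a compact $K \subset \Omega$, pick an open ball $B$ around any point of $K$ with $B \subset \Omega$; since $\mi_n \to \mi$ weakly and $\mi(B)>0$, one has $\mi_n(B) \geq \tfrac{1}{2}\mi(B)$ for $n$ large, which combined with the pointwise upper bound gives $\sup_B f_n \geq \mi(B)/(2|B|)$, and a standard reflection argument for concave functions (together with $f_n \leq C(n)$) propagates this to a pointwise lower bound on a slightly smaller ball. Covering $K$ by finitely many such balls yields $c_K \leq f_n \leq C(n)$ on $K$. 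Then $\log f_n$ is concave and two--sided bounded on $K$, hence uniformly Lipschitz on compact subsets of its interior; Arzel\`a--Ascoli together with uniqueness of the limit density then yields $f_n \to f$ locally uniformly on $\Omega$.

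Finally, for $x, x+tv \in \Omega$, the point $x+tv_n$ lies in $\Omega$ for large $n$, so local uniform convergence gives
\[
|f(x+tv) - f(x)| = \lim_{n\to\infty} |f_n(x+tv_n) - f_n(x)| \leq L|t|.
\]
To extend the bound to all of $\R^n$, take $f$ to be its continuous extension from $\Omega$ and zero outside $\bar\Omega$. Along any line in direction $v$, $f$ is continuous and Lipschitz on its support interval and zero outside; the only point that needs checking is that the continuous extension vanishes at the endpoints of that support interval, which holds because each $f_n$ is continuous along $v_n$ (being Lipschitz there) and hence vanishes at its own $v_n$--boundary, a property preserved in the limit via the local uniform convergence established above. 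The principal technical obstacle is precisely this local uniform convergence: log--concavity is what forces equicontinuity once both a uniform upper bound and a local lower bound on the densities are in place, and without it the pointwise Lipschitz condition could not be passed to $f$.
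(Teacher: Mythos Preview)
Your route differs from the paper's. You aim to upgrade weak convergence to locally uniform convergence of the $f_n$ on $\Omega=\mathrm{int}(\supp\mi)$ and then read off the Lipschitz bound pointwise. The paper never proves pointwise convergence of the $f_n$; instead it mollifies. With $h_{\delta,z}(x)=\max\{1-|x-z|/\delta,0\}$ and $T_{\delta,z}(\nu)=\int h_{\delta,z}\,d\nu\big/\int h_{\delta,z}\,dx$, one checks that $z\mapsto T_{\delta,z}(\mi_n)$ is $L$-Lipschitz in direction $v_n$, passes to the limit in $n$ using nothing more than weak convergence (which is precisely what $T_{\delta,z}$ is built to respect), and only then sends $\delta\to 0$ at continuity points of $f$.

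On $\Omega$ your argument is essentially right, with one technical caveat: the ``reflection argument for concave functions'' you invoke needs $\log f_n$ to be an honest concave function on the ball $B$, i.e.\ $f_n>0$ there, and you have not shown that $\supp\mi_n$ eventually contains $B$. This is repairable (use that an isotropic log--concave density is bounded below on a fixed ball around the origin, then a cone argument), but it is not automatic.

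The genuine gap is at the boundary. Being $L$-Lipschitz in direction $v$ on $\Omega$ does not by itself produce an $L$-Lipschitz representative: the uniform density on a cube is $0$-Lipschitz on its interior and has no Lipschitz representative at all. You must show that the continuous extension of $f$ along a $v$-line from $\Omega$ actually equals $0$ at the endpoints, and your justification --- that $f_n$ vanishes at its own $v_n$-boundary and this is ``preserved in the limit via the local uniform convergence'' --- does not work, because your convergence lives only on $\Omega$ and says nothing about $\partial\Omega$ or the exterior. One way to close this within your framework: for $x\in\Omega$ and $y=x+tv\notin\overline{\Omega}$, set $y_n=x+tv_n\to y$ and prove $f_n(y_n)\to 0$ (from $\mi_n(\overline{B(y,r)})\to 0$ together with log--concavity and the fixed interior ball where $f_n$ is bounded below); then $f(x)=\lim f_n(x)\le\lim f_n(y_n)+L|t|=L|t|$, and letting $y\to\partial\Omega$ gives the vanishing you need. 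The paper's mollifier avoids all of this: $T_{\delta,z}(\mi)$ is globally defined, $L$-Lipschitz in $v$ for every $\delta>0$, and identically zero once $z$ is $\delta$-far from $\supp\mi$, so the Lipschitz inequality across the boundary comes for free.
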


\begin{proof} First note that $\mi$ has to be log--concave and isotropic, and thus has a density. Now pass to such a subsequence of $n$s that $v_n$ is convergent to some $v$.

Let $h_{\delta,z}(x) = \max\{1 - |x-z|\slash \delta, 0\}$, and let $T_{\delta, z}(\mi) = \int h_{\delta,z}(x) d\mi(x) \slash \int h_{\delta,z}(x) dx$. For any $n$ the function $T_{\delta,z}(\mi_n)$ is $L$--Lipschitz in direction $v_n$ as a function of $z$. Consider any fixed point $z$ and any constant $t > 0$. We have 
$$|T_{\delta,z + tv_n}(\mi_n) - T_{\delta,z + tv}(\mi)| \leq |T_{\delta,z + tv_n}(\mi_n) - T_{\delta,z+tv}(\mi_n)| + |T_{\delta,z+tv}(\mi_n) - T_{\delta,z + tv}(\mi)|.$$ The first part tends to zero as $h_{\delta,z+tv_n}$ tends to $h_{\delta,z + tv}$ uniformly, while the density of $\mi_n$ is bounded uniformly in $n$. The second part tends to zero as $T$ is a continuous functional of a measure. Thus, in particular, $T_{\delta, z}(\mi)$ is a $L$--Lipschitz function of $z$. Note that the Lipschitz constant is independent of $\delta$.

We modify $f$ to be zero on the boundary of $\supp \mi$ --- this is a modification on a set of measure 0, so the modified $f$ is also a density of $\mi$. We will prove $f$ is now $L$--Lipschitz in the direction $v$. Take any point $z$ and consider the line $\LL = z + tv$. If this line does not intersect the interior of $\supp \mi$, $f$ is equal to 0 on $\LL$, and thus is $L$--Lipschitz. Now suppose $\LL$ intersects the interior of $\supp \mi$. As $\supp \mi$ is convex, $\LL$ intersects the boundary of $\supp \mi$ in exactly two points. Take any two points $x,y$ on $\LL$, different from the two intersection points. Then $f$ is continuous in some neighbourhoods of $x$ and $y$, for $f$ is continuous both in the interior of $\supp \mi$ and outside $\supp \mi$. Thus $T_{\delta,x} \ra f(x)$ and $T_{\delta,y} \ra f(y)$ when $\delta \ra 0$, so $f$ is Lipschitz everywhere except the two boundary points (but also when $x,y$ straddle a boundary point). To deal with the case of $x$ or $y$ lying on the boundary of $\supp \mi$ we take a sequence $x_n \ra x$ (or $y_n \ra y$, respectively).\end{proof}

\begin{lem} \label{twocases} Let $(d+1)^{-2} > \eps > 0$. Let $X_1,X_2,\ldots,X_n$ be such a sequence of independent random variables in $\R^d$ that $\sum_{i=1}^n X_i$ is isotropic. Let $\mathcal{M}_i$ denote the space spanned by the eigenvectors of $\cov X_i$ corresponding to eigenvalues larger than $1-\eps$. Then:\begin{itemize}
\item either $\sum_i \dim \mathcal{M}_i = d$,
\item or there exists a subset $S$ of $\{1,2,\ldots,n\}$ such that $\cov \sum_{i\in S} X_i$ has an eigenvalue $\lambda$ satisfying $\eps < \lambda < 1-\eps$.
\end{itemize}\end{lem}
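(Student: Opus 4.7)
The plan is to prove the contrapositive: assume no subset sum $A_S := \sum_{i \in S} \cov X_i$ has an eigenvalue in the open interval $(\eps, 1-\eps)$, and deduce $\sum_i \dim \mathcal{M}_i = d$. Since $\sum_i \cov X_i = \Id$, each $A_S \preceq \Id$, so the eigenvalues of every $A_S$ lie in $[0,1]$; combined with the hypothesis they must lie in $[0,\eps] \cup [1-\eps, 1]$. Write $d_i := \dim \mathcal{M}_i$ for the number of eigenvalues of $\cov X_i$ strictly greater than $1-\eps$; the idea is to sandwich $\sum_i d_i$ between $d$ and $d$.

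The upper bound is a quick trace estimate. Each $\cov X_i$ has $d_i$ eigenvalues strictly above $1-\eps$ and the rest non-negative, so $\tr(\cov X_i) > (1-\eps)d_i$. Summing and using $\sum_i \cov X_i = \Id$ gives $d > (1-\eps)\sum_i d_i$; the hypothesis $\eps < (d+1)^{-2}$ implies $\eps < 1/(d+1)$, whence $d/(1-\eps) < d+1$ and therefore $\sum_i d_i \leq d$.

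For the matching lower bound I fix any ordering of the variables and track the partial sums $T_k := \sum_{i \leq k} A_i$, for which $T_0 = 0$, $T_n = \Id$, and every $T_k$ is a subset sum, hence has spectrum in $[0,\eps] \cup [1-\eps,1]$. Let $r_k$ denote the number of eigenvalues of $T_k$ that lie in $[1-\eps,1]$, so $r_0 = 0$ and $r_n = d$. The key claim is $r_k - r_{k-1} \leq d_k$, which I prove using Weyl's inequality $\lambda_{p+q-1}(M+N) \leq \lambda_p(M) + \lambda_q(N)$ with $M = T_{k-1}$, $N = \cov X_k$, $p = j > r_{k-1}$ and $q = d_k + 1$. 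For such indices one has $\lambda_j(T_{k-1}) \leq \eps$ (since this eigenvalue is not in $[1-\eps,1]$) and $\lambda_{d_k+1}(\cov X_k) \leq \eps$ (since by definition of $d_k$ this eigenvalue is at most $1-\eps$, and it lies in $[0,\eps]\cup[1-\eps,1]$), so $\lambda_{d_k+j}(T_k) \leq 2\eps < 1-\eps$, forcing this eigenvalue into $[0,\eps]$. Hence at most $r_{k-1} + d_k$ eigenvalues of $T_k$ reach $[1-\eps,1]$, and telescoping yields $d = r_n \leq \sum_k d_k$, completing the sandwich.

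The main obstacle is the boundary case where some $\cov X_k$ has an eigenvalue equal exactly to $1-\eps$: the strict-versus-weak convention in the definition of $\mathcal{M}_k$ then matters, and the key bound $\lambda_{d_k+1}(\cov X_k) \leq \eps$ can fail because under our spectral restriction this eigenvalue a priori could equal $1-\eps$. I would dispose of this either by replacing $\eps$ with a slightly smaller value — the hypothesis $\eps < (d+1)^{-2}$ leaves room, and only finitely many bad eigenvalues need to be avoided — or by a small case split. Modulo this edge case, the proof is a clean application of Weyl's inequality to monotone PSD perturbations.
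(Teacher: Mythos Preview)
Your proof is sound and takes a genuinely different route from the paper. The paper argues directly and constructively: after the same trace bound $\sum_i k_i \leq d$, it assumes $\sum_i k_i \leq d-1$ and \emph{builds} $S$. First any singleton $A_i := \cov X_i$ with an eigenvalue already in $(\eps,1-\eps)$ is taken as $S=\{i\}$; otherwise, reordering so that the indices with $k_i = 0$ come first (say $i=1,\dots,l$), each such $A_i$ has trace at most $d\eps$ while their combined trace is at least $1-(d-1)^2\eps \geq 4d\eps$, so the running sums $b(m)=\sum_{i\leq m}\tr A_i$ must hit the window $[d\eps,2d\eps]$ at some $m_0$, and $S=\{1,\dots,m_0\}$ then has an eigenvalue in $[\eps,2d\eps]\subset(\eps,1-\eps)$. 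Your approach is instead contrapositive and spectral: assuming every subset sum avoids $(\eps,1-\eps)$, Weyl's inequality along the chain $0=T_0\preceq\cdots\preceq T_n=\Id$ forces $\sum_i d_i \geq d$. The paper's argument is more elementary (pure trace arithmetic, no eigenvalue inequalities for sums of matrices) and only ever inspects singleton covariances before exhibiting $S$; yours is shorter, avoids the ad~hoc incremental step, and makes the structural reason for the dichotomy---a monotone PSD path from $0$ to $\Id$ must cross any spectral gap---transparent, at the cost of invoking the gap hypothesis for all prefix sums rather than just singletons.

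The boundary worry you raise is real and is not specific to your argument: if some $A_k$ has an eigenvalue exactly $1-\eps$ the lemma can fail as literally stated (take $d=1$, $\eps=1/5$, $\cov X_1=4/5$, $\cov X_2=1/5$: then $\sum_i\dim\mathcal{M}_i=0$ yet no subset sum has an eigenvalue strictly between $1/5$ and $4/5$), and the paper's proof breaks at the analogous spot (the step $\tr A_i \leq d\eps$ when $k_i=0$ no longer holds). So ``replacing $\eps$ with a slightly smaller value'' cannot rescue the lemma itself; your instinct is right but the perturbation belongs to the \emph{application}. In Theorem~\ref{nomulti} one chooses $\eps$ freely, and a generic choice avoids the finitely many eigenvalues of the $A_i$, which is all that is needed.
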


\begin{proof} Let $A_i$ denote the covariance matrix of $X_i$. If any single $A_i$ has an eigenvalue between $\eps$ and $1-\eps$, we set $S = \{i\}$. Consider eigenvalues of $A_i$s larger than $1 - \eps$, assume $A_i$ has $k_i$ such eigenvalues, let $k = \sum_i k_i$. We have $k \leq d$, for otherwise $d = \tr \id = \tr \sum A_i \geq \sum_i k_i (1-\eps) \geq (d+1)(1-\eps) > d$. If $k = d$ the first condition is satisfied. Otherwise there is at most $d-1$ indices $i$ with $k_i > 0$, and the sum of the traces of the appropriate $A_i$s is at most the sum of the large eigenvalues (at most $d-1$) and the sum of the small eigenvalues (at most $(d-1)^2 \eps$, for each of the matrices has at most $d-1$ small eigenvalues). Thus the trace of the sum of the remaining matrices is at least $1 - (d-1)^2 \eps$.

Rearrange the vectors so $k_i = 0$ for $k = 1,2,\ldots,l$, and $k_i > 0$ for $i > l$. Consider the sums $b(m) = \sum_{i=1}^m \tr A_i$. We have $b(0) = 0$, $b(l) \geq 1 - (d-1)^2 \eps \geq 4d \eps$ and $b(m+1) - b(m) \leq d\eps$. Thus for some $m_0$ we shall have $d\eps \leq b(m_0) \leq 2d\eps$. We put $S = \{1,2,\ldots,m_0\}$. The vector $\sum_{i=1}^{m_0} X_i$ has to have an eigenvalue of the covariance matrix no smaller than $\eps$ (for the trace of this matrix is at least $d\eps$), on the other hand all its eigenvalues are no larger than the trace, which in turn is no larger than $2d\eps < 1 - \eps$. Thus $S$ satisfies the theorem.
\end{proof}

\begin{thm}\label{nomulti} Let $Y_k = \sum_{i=1}^{i(k)} X_{k,i}$ be isotropic, log--concave random variables on $\R^n$. We assume all $X_{k,i}$ are independent. Assume $Y_k \ra Y$ weakly, where $Y$ is a uniform measure on a convex body. Then $Y = Z_1 + Z_2 + \ldots Z_m$ for some independent random variables $Z_j$, where each $Z_j$ is a weak limit of some of the $X_{k,i}$s, and all $Z_j$ are supported on orthogonal subspaces.\end{thm}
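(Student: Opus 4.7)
The plan is to pair Lemma \ref{twocases} with Proposition \ref{mainprop} (via Lemma \ref{liplimit}) in a dichotomy: if along a subsequence of $k$ we fall into case (b) of Lemma \ref{twocases}, we force the limit $Y$ to have a density which is Lipschitz in some direction, and that contradicts a simple observation; otherwise case (a) holds for all large $k$ and one can read off the desired decomposition from it. The enabling observation is that a uniform density on any bounded convex body $K \subset \R^n$ fails to be Lipschitz in every unit direction $v$: at a point $x_0 \in K$ maximising $\is{\cdot}{v}$ the density drops from $1/|K|$ to $0$ across $x_0$ in direction $v$, so no finite Lipschitz constant works there.

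Assume toward contradiction that case (b) of Lemma \ref{twocases} holds along some subsequence of $k$ with a fixed $\eps \in (0, (n+1)^{-2})$. Then there exist $S_k \subset \{1,\ldots,i(k)\}$ and an eigenvalue $\lambda_k \in (\eps, 1-\eps)$ of $\cov \sum_{i \in S_k} X_{k,i}$. Set $U_k := \sum_{i \in S_k} X_{k,i}$ and $V_k := Y_k - U_k$. Since $Y_k$ is isotropic, $\cov U_k + \cov V_k = \Id$, so $\cov U_k$ and $\cov V_k$ commute and are simultaneously orthogonally diagonalisable; pick such a basis and let $v_k$ be the coordinate direction corresponding to $\lambda_k$. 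Proposition \ref{mainprop} applied to $U_k, V_k$ in this basis bounds the Lipschitz constant of the density of $Y_k$ in direction $v_k$ by $C(n)/\sqrt{\lambda_k(1-\lambda_k)} \leq C(n)/\sqrt{\eps(1-\eps)}$, independently of $k$. Passing to a further subsequence with $v_k \ra v$, Lemma \ref{liplimit} makes the density of $Y$ Lipschitz in direction $v$, contradicting the preceding paragraph. Hence for every $\eps < (n+1)^{-2}$, case (a) holds for all sufficiently large $k$, and moreover each $\cov X_{k,i}$ has spectrum in $[0,\eps] \cup [1-\eps, 1]$ (otherwise $S = \{i\}$ would put us into case (b)).

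Fix such an $\eps$ and $k$ large. Since $\sum_i \dim \mathcal{M}_i^{(k)} = n$ and each $\mathcal{M}_i^{(k)}$ is either trivial or of dimension $\geq 1$, at most $n$ of the $X_{k,i}$ have $\mathcal{M}_i^{(k)} \neq \{0\}$; call these \emph{major} and gather the rest into $B_k$. Taking traces in $\sum_j \cov X_{k,j} = \Id$ and using $\tr \cov X_{k,i} \geq (1-\eps) \dim \mathcal{M}_i^{(k)}$ for the major indices gives $\tr \cov B_k \leq \eps n$, so after recentring $B_k \ra 0$ in $L^2$ as $\eps \ra 0$. A matrix-perturbation argument exploiting $\sum_j \cov X_{k,j} = \Id$, the spectral gap $(\eps, 1-\eps)$ of each major $\cov X_{k,i}$, and the dimension identity then forces the subspaces $\mathcal{M}_i^{(k)}$ to be pairwise $O(\sqrt{\eps})$-close to orthogonal and each major $\cov X_{k,i}$ to be within $O(\eps)$ of the orthogonal projection onto $\mathcal{M}_i^{(k)}$.

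A diagonal extraction along a sequence $\eps_\ell \ra 0$, together with compactness of the Grassmannian and subsequencing to fix the number $m \leq n$ of major variables and their dimensions, yields one subsequence of $k$, an orthogonal decomposition $\R^n = \bigoplus_{j=1}^m \mathcal{M}_j$, and a partition $\{1,\ldots,i(k)\} = S_1^{(k)} \sqcup \cdots \sqcup S_m^{(k)}$ (absorbing the minor variables arbitrarily) so that $W_k^{(j)} := \sum_{i \in S_j^{(k)}} X_{k,i}$ has covariance tending to the orthogonal projection onto $\mathcal{M}_j$. Tightness from the uniform $L^2$ bound together with log-concavity let us extract a further subsequence along which $W_k^{(j)} \ra Z_j$ weakly, with $Z_j$ supported on $\mathcal{M}_j$; independence of the $W_k^{(j)}$ at each fixed $k$ is preserved under the joint weak limit, so $Y = Z_1 + \cdots + Z_m$ with the $Z_j$ independent and supported on orthogonal subspaces. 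I expect the main difficulty to lie in the previous paragraph: carefully running the matrix-perturbation step and, above all, upgrading ``the covariance of $W_k^{(j)}$ concentrates on $\mathcal{M}_j$'' to ``$Z_j$ is \emph{supported} on $\mathcal{M}_j$'', which is where log-concavity has to do the essential work, since small variance of a log-concave variable in a direction forces concentration at a single point in that direction.
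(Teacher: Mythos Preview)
Your approach is essentially the same as the paper's: the dichotomy via Lemma \ref{twocases}, the Lipschitz contradiction through Proposition \ref{mainprop} and Lemma \ref{liplimit}, then extracting the decomposition from case (a) along $\eps_k\to 0$ are all exactly what the paper does. The one stylistic difference is your ``matrix-perturbation'' step: you try to show the $\mathcal{M}_i^{(k)}$ are $O(\sqrt{\eps})$-nearly orthogonal for each $k$, whereas the paper first passes to limit subspaces $\mathcal{M}_i$ (via Grassmannian compactness) and only then proves orthogonality by a short direct computation with vectors $v_{k,1}\in\mathcal{M}_{k,1}$, $v_{k,2}\in\mathcal{M}_{k,2}$ and the isotropy of $Y_k$. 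Your version is also easy to carry out: for a unit $v\in\mathcal{M}_1^{(k)}$ one has $\is{A_1 v}{v}\geq 1-\eps$, hence $\is{A_2 v}{v}\leq\eps$, and since $\mathcal{M}_2^{(k)}$ is $A_2$-invariant with $A_2\geq (1-\eps)$ on it, the projection of $v$ onto $\mathcal{M}_2^{(k)}$ has squared norm at most $\eps/(1-\eps)$.

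The two points you flag as difficulties are not genuine obstacles. For the near-orthogonality, see above. For ``small variance forces concentration'': once $\cov W_k^{(j)}$ converges to the projection onto $\mathcal{M}_j$, the variance of $\is{W_k^{(j)}}{\theta}$ tends to $0$ for every $\theta\perp\mathcal{M}_j$, so (by Chebyshev, log-concavity is not even needed here) $\is{W_k^{(j)}-\E W_k^{(j)}}{\theta}\to 0$ in probability, forcing $Z_j$ to be supported on a translate of $\mathcal{M}_j$; the translates then have to fit together because $\E Y_k=0$. One small cosmetic point: you ``absorb the minor variables arbitrarily'' into the $W_k^{(j)}$, but since $\tr\cov B_k\leq \eps n\to 0$ you can equally well leave $B_k$ separate and note it converges to a Dirac mass; that way each $Z_j$ is the weak limit of a single sequence $(X_{k,i(j)})_k$, matching the phrasing of the theorem and the paper.
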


\begin{proof}
Choose any $\eps > 0$ and apply Lemma \ref{twocases} for each $Y_k$. If for some $k$ the second case of the lemma occurs, we have $Y_k = Y_k' + Y_k''$ (where $Y_k'$ is the sum of the $X_{k,i}$s for $i \in S$, and $Y_k''$ is the sum of the others), and some eigenvalue of $Y_k'$ is between $\eps$ and $1-\eps$. As $Y_k$ is isotropic, $Y_k''$ has the same eigenvectors as $Y_k'$, thus we may apply Proposition \ref{mainprop} to obtain that the density of $Y_k$ is $C(d)\slash \eps$--Lipschitz in the direction of the appropriate eigenvector. If this occurs an infinite number of times, we may apply Lemma \ref{liplimit} to the subsequence of those $k$s to obtain that the density of $Y$ is $C(d)\slash \eps$--Lipschitz in some direction, and this contradicts the assumption that $Y$ was uniform on a convex body.

Thus for any $\eps > 0$ the second case of the lemma occurs only finitely many times. We may thus pass to a subsequence on which the first case occurs for each $Y_k$ with some $\eps_k$ tending to zero. Thus for each each $k$ we have a set of at most $d$ linear spaces $\mathcal{M}_{k,i}$. We pass to a subsequence again, so that the number and dimensions of the subspaces are constant, and again (after an appropriate rearrangement) to have $\mathcal{M}_{k,i} \ra \mathcal{M}_i$ (the convergence of linear subspaces is taken, for instance, in the metric of the grassmanian manifold, this can be done due to the compactness of this manifold).

The spaces $\mathcal{M}_i$ have to be orthogonal. Assume, say, $\mathcal{M}_1$ and $\mathcal{M}_2$ are not orthogonal, say some two unit vectors $v_1 \in \mathcal{M}_1$ and $v_2 \in \mathcal{M}_2$ satisfy $\is{v_1}{v_2} = c < 0$. Then we have sequences $v_{k,1} \ra v_1$ in $\mathcal{M}_1$ and $v_{k,2} \ra v_2$ in $\mathcal{M}_2$. Note the following inequalities:
\begin{align*}1 &= \E \is{v_{k,1}}{Y_k}^2 \geq \E\is{v_{k,1}}{X_{k,1} + X_{k,2}}^2 = \E \is{v_{k,1}}{X_{k,1}}^2 + \is{v_{k,1}}{X_{k,2}}^2 \\ &\geq 1 - \eps_k + \is{v_{k,1}}{X_{k,2}}^2,\end{align*} thus $\E \is{v_{k,1}}{X_{k,2}}^2 \leq \eps_k$. Moreover 
\begin{align*}\E \is{v_{k,1} + v_{k,2}}{X_{k,1}}^2 &= \E \is {v_{k,1}}{X_{k,1}}^2 + 2\is{v_{k,1}}{X_{k,1}} \is{v_{k,2}}{X_{k,1}} + \is{v_{k,2}}{X_{k,1}}^2 \\ & \geq 1 - \eps_k - 2|\E \is{v_{k,1}}{X_{k,1}} \is{v_{k,2}}{X_{k,1}}| \geq 1 - \eps_k - 2\sqrt{\eps_k}.\end{align*} Finally $$\E \is{v_{k,1}+v_{k,2}}{Y_k}^2 \geq \E \is{v_{k,1} + v_{k,2}}{X_{k,1} + X_{k,2}}^2 \geq 2(1 - \eps_k - 2\sqrt{\eps_k}),$$ which is arbitrarily close to $2$ for large enough $k$. On the other hand, however, $\is{v_{k,1} + v_{k,2}}{v_{k,1}+v_{k,2}} = 2 + 2\is{v_{k,1}}{v_{k,2}} \ra 2 - 2c$, thus we should have $\E \is{v_{k,1}+v_{k,2}}{Y_k}^2 \ra 2 - 2c$, a contradiction.

Now each $X_{k,i}$ for fixed $i$ converges weakly to some measure $Z_i$ distributed on $\mathcal{M}_i$ (for the variance in the directions orthogonal to $\mathcal{M}_i$ tends to zero, as shown above), and $Y$ is the sum of $Z_i$s.
\end{proof}

Now all that remains is to combine the theorem above with Proposition \ref{structure}:

\begin{thm}\label{nomulti2}
Let $\KK$ be any class of log--concave measures closed under linear transformations, and let $\bKK$ be the smallest class of log--concave measures containing $\KK$ which is closed under product, weak limits and linear transformations. Let $\mi$ be any isotropic measure in $\bKK$ which is a uniform measure on some convex body. Then $\mi$ is a product of some measures $\mi_1, \mi_2,\ldots, \mi_n$, each of which is a weak limit of some sequence of linear images of measures in $\KK$.
\end{thm}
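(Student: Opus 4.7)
The plan is to combine Proposition \ref{structure} with Theorem \ref{nomulti} essentially as a bookkeeping exercise; the substantive work has already been done in proving those two results. First, apply Proposition \ref{structure} to the isotropic measure $\mi \in \bKK$: it is the weak limit of a sequence of isotropic measures $\mi_k$, each of which is the linear image of a tensor product $\nu_{k,1}\otimes\cdots\otimes\nu_{k,i(k)}$ of finitely many measures from $\KK$. Realize this concretely by letting $X_{k,i}$ be independent random vectors with laws $\nu_{k,i}$, and letting $T_k$ be the linear map realizing $\mi_k$. Decompose $T_k$ into blocks $A_{k,i}$ acting on the $i$-th factor, and set $\tilde{X}_{k,i} := A_{k,i} X_{k,i}$, so that if $Y_k \sim \mi_k$ then
$$Y_k \;=\; \sum_{i=1}^{i(k)} \tilde{X}_{k,i}$$
with the $\tilde{X}_{k,i}$ independent log--concave random vectors in $\R^n$, each a linear image of a measure from $\KK$. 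This converts the tensor--plus--linear--map structure supplied by Proposition \ref{structure} into exactly the sum--of--independents structure required by Theorem \ref{nomulti}.

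Next, apply Theorem \ref{nomulti} to the sequence $Y_k$. Each $Y_k$ is isotropic and log--concave, written as a sum of independent log--concave variables, and $Y_k$ converges weakly to $Y\sim\mi$, which is uniform on a convex body. The theorem produces a decomposition $Y = Z_1 + \cdots + Z_m$ with the $Z_j$ independent, supported on mutually orthogonal subspaces $\mathcal{M}_j \subset \R^n$, and each $Z_j$ the weak limit of a subsequence of the $\tilde{X}_{k,i}$'s.

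Now observe that since $\mi$ is isotropic, in particular non--degenerate on $\R^n$, the subspaces $\mathcal{M}_j$ must together span $\R^n$ and hence form an orthogonal direct sum $\R^n = \mathcal{M}_1 \oplus \cdots \oplus \mathcal{M}_m$. Combining this orthogonality with the independence of the $Z_j$, the law $\mi$ of $Y$ is identified with the product $\mi_1 \otimes \cdots \otimes \mi_m$, where $\mi_j$ is the law of $Z_j$ on $\mathcal{M}_j$. By the previous paragraph, each $\mi_j$ is the weak limit of the laws of certain $\tilde{X}_{k,i} = A_{k,i} X_{k,i}$, i.e.\ of a sequence of linear images of measures from $\KK$, which is exactly what the conclusion asks for.

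The conceptually delicate work has been carried out earlier: Theorem \ref{nomulti} itself is the main obstacle, resting on the Lipschitz bound of Proposition \ref{mainprop} together with the passage to the limit in Lemma \ref{liplimit}. At the level of Theorem \ref{nomulti2}, the only genuinely substantive observation is the reduction step at the beginning, namely that a linear image of a tensor product of log--concave measures is automatically realizable as a sum of independent log--concave random variables, so that the output of Proposition \ref{structure} feeds directly into the hypothesis of Theorem \ref{nomulti}. Everything after that, including the orthogonal--direct--sum identification of $\mi$ as a product, is routine.
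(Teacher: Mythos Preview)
Your proposal is correct and follows exactly the approach the paper intends: the paper's own proof of Theorem~\ref{nomulti2} consists of a single sentence saying that it remains only to combine Theorem~\ref{nomulti} with Proposition~\ref{structure}, and you have simply written out the details of that combination. The one substantive observation you make explicit---that a linear image of a tensor product decomposes as a sum of independent linear images of the factors---is precisely the bridge between the two results, and your handling of the orthogonal direct--sum identification at the end is routine as you say.
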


\subsection{Applications and discussion}
Let us apply this result to a typical case. 
\begin{cor} Let $\KK$ be the smallest class of log--concave measures, closed under products, linear transformations and weak limits, which contains all $1$--dimensional log--concave measures. Let $\mi \in \KK$ be the uniform measure distributed on a convex body $B$. Then $B$ is a parallelotope.\end{cor}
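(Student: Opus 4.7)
My plan is to apply Theorem \ref{nomulti2} with the initial class being the one-dimensional log--concave measures (which is automatically closed under linear transformations on $\R$, since those are just affine maps). Since being a parallelotope is preserved under affine maps and the hypothesis ``$\mi \in \KK$'' is preserved under linear images, I may assume after an affine change of coordinates that $\mi$ is isotropic. Theorem \ref{nomulti2} then gives $\mi = \mi_1 \otimes \cdots \otimes \mi_m$, where each $\mi_j$ is a weak limit of linear images of one--dimensional log--concave measures.

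Next I would argue that each $\mi_j$ is supported on a one--dimensional subspace $L_j$. Any linear image of a measure on $\R$ lives on a single line (or degenerates to a point); by compactness of the Grassmannian I may pass to a subsequence along which the supporting lines converge to some line $L_j$, and weak convergence forces the limit measure to be supported on $L_j$ (for any open set disjoint from $L_j$, eventually the measures in the sequence give it mass zero). Because $\mi_1 \otimes \cdots \otimes \mi_m$ is the non--degenerate measure $\mi$ on $\R^n$, the $L_j$ span $\R^n$ with $\sum_j \dim L_j = n$, hence $m=n$ and each $\mi_j$ is a genuine one--dimensional log--concave measure. Independence of the factors together with isotropy of $\mi$ (identity covariance) forces the $L_j$ to be pairwise orthogonal.

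Now pick an orthonormal basis $v_1, \ldots, v_n$ with $v_j \in L_j$ and let $g_j$ be the density of $\mi_j$ on $L_j \cong \R$. In these coordinates the density of $\mi$ is both
$$ g(x_1, \ldots, x_n) = \prod_{j=1}^n g_j(x_j) \quad\text{and}\quad g(x) = c\,\1_B(x). $$
Fixing any $x_1$ with $g_1(x_1)>0$ and letting $B_{x_1}$ denote the corresponding slice of $B$, the identity $\prod_{i\geq 2} g_i(x_i) = c\,g_1(x_1)^{-1}\, \1_{B_{x_1}}(x_2, \ldots, x_n)$ shows that the slice $B_{x_1}$ does not depend on the choice of such $x_1$; applying the same reasoning to each coordinate gives $B = I_1 \times \cdots \times I_n$ for some intervals $I_j$. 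Consequently each $g_j$ is a scalar multiple of $\1_{I_j}$, and $B$ is a rectangular box in an orthonormal frame, hence a parallelotope. Reversing the initial affine normalization preserves this conclusion, since affine images of parallelotopes are parallelotopes.

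The main obstacle I expect is step two --- justifying that weak limits of linear images of one--dimensional measures remain supported on one--dimensional subspaces, and combining this with non--degeneracy and isotropy to produce exactly $n$ orthogonal one--dimensional factors. The rest is essentially book--keeping about the unique factorization of an indicator as a product of one--dimensional densities.
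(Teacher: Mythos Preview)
Your argument is correct and follows the same route as the paper: reduce to the isotropic case, invoke Theorem~\ref{nomulti2} to write $\mi$ as a product of one--dimensional log--concave factors, and then observe that a product of one--dimensional densities equal to a constant times an indicator forces the body to be a coordinate box.

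Two remarks on where you are working harder than necessary. First, your ``main obstacle'' is not really an obstacle: the paper simply notes that the class of one--dimensional log--concave measures is already closed under linear transformations and weak limits, so the factors $\mi_j$ produced by Theorem~\ref{nomulti2} are automatically one--dimensional log--concave measures; your Grassmannian compactness argument is a correct alternative but unnecessary. Second, you re--derive orthogonality of the supporting lines from isotropy, but this is already contained in the conclusion of Theorem~\ref{nomulti} (the $Z_j$ are supported on orthogonal subspaces), which underlies Theorem~\ref{nomulti2}. With those observations the proof collapses to the paper's two--line version: the isotropic $\mi$ is a product of $n$ one--dimensional log--concave measures, hence uniform on a cube; undoing the affine normalization gives a parallelotope.
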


\begin{proof} The class of 1--dimensional log--concave measures is closed under linear transformations and weak limits, thus by Theorem \ref{nomulti2} any uniform measure on a convex body in $\KK$ which is isotropic has to be a product of 1--dimensional log--concave measures, and thus equidistributed on the hypercube. Taking any, not necessarily isotropic, measure equidistributed on some $B$ in $\KK$ we can take a linear transformation to make it isotropic, and as after this transformation $B$ is a hypercube, it had to be a parallelotope before.
\end{proof}

Of course a case--by--case survey of what can be obtained from various classes of log--concave measures is impossible here. However the main scheme should be clear: we begin with some class of measures $\KK$, and if there are no non--trivial convex bodies obtained as weak limits of linear transformations of measures from $\KK$, then $\bKK$ contains no non--trivial convex bodies. It would be interesting to prove, for instance, that the class of $\ell_p$ balls (for fixed $p$ between 1 and $\infty$) generates no non--trivial convex bodies (that is, convex bodies not being linear transforms of $\ell_p$ balls).

\end{document}